\newtheorem{Theorem}{Theorem}[section]
\newtheorem{Lemma}[Theorem]{Lemma}
\newtheorem{Proposition}[Theorem]{Proposition}
\newtheorem*{Def}{Definition}
\newtheorem*{Conjecture}{Conjecture}
\newenvironment{Proof*}{{\it Proof.}}
\newcommand{\ZZ}{\mathbb{Z}}
\newcommand{\CCC}{\mathcal{C}}
\newcommand{\AAA}{\mathcal{A}}
\newcommand{\II}{\mathcal{I}}
\newcommand{\Dd}{\mathcal{D}}
\newcommand{\DEF}[1]{\emph{#1}}
\newcommand{\ch}{{\rm char}}
\newcommand{\diam}[1]{{\rm diam}(#1)}
\begin{document}

\title{The total graphs of finite rings}
\author{David Dol\v zan, Polona Oblak}
\date{\today}

\address{D.~Dol\v zan:~Department of Mathematics, Faculty of Mathematics
and Physics, University of Ljubljana, Jadranska 19, SI-1000 Ljubljana, Slovenia; e-mail: 
david.dolzan@fmf.uni-lj.si}
\address{P.~Oblak: Faculty of Computer and Information Science, University of Ljubljana,
Tr\v za\v ska 25, SI-1000 Ljubljana, Slovenia; e-mail: polona.oblak@fri.uni-lj.si}

 \subjclass[2010]{16U99, 05C25}
 \keywords{Finite ring, zero-divisor, total graph}
\bigskip

\begin{abstract} 
 In this paper we extend the study of total graphs $\tau(R)$ to non-commutative finite rings $R$. We prove that $\tau(R)$ is connected if and only if 
 $R$ is not local and we see that in that case $\tau(R)$ is always Hamiltonian. We also find an upper bound 
 for the domination number of $\tau(R)$ for all finite rings $R$.
\end{abstract}

\maketitle 
%\parindent=0cm

%-----------------------------------------------------
%-----------------------------------------------------
\section{Introduction}
%-----------------------------------------------------
%-----------------------------------------------------

\bigskip

In \cite{AndBad08}, Anderson and Badawi introduced the notion of a total graph of a 
commutative ring $R$ as the graph with all elements of $R$ as vertices, and for distinct
$x, y \in R$, the vertices $x$ and $y$ are adjacent if and only if $x + y$ is a zero-divisor in $R$. 
They studied some graph theoretical parameters of this graph such as diameter and girth.
In addition, they studied some special subgraphs of the total graph, and the properties
of the total graph based on these subgraphs. They also proved that the total graph of
a commutative ring is connected if and only if the set of zero-divisors does not form an ideal.
In \cite{akbari} Akbari et~al.~proved that if the
total graph of a finite commutative ring is connected then it is also a Hamiltonian graph.
In \cite{maimani}, Maimani et~al.~gave the necessary and sufficient conditions for the total
graphs of  finite commutative rings to be planar or toroidal and in \cite{TaChAsir13} Tamizh Chelvam and Asir
characterized all commutative rings such that their total graphs have genus $2$.

In \cite{Shekarriz}, Shekarriz et~al.~studied the total graph of a finite commutative ring and calculated the domination
number of such a ring and also found the necessary and sufficient conditions for the graph to be Eulerian.

\bigskip

In this paper, we extend the study of total graphs to the setting of arbitrary (possibly non-commutative) finite rings. 
In accordance with \cite{AndBad08}, we define the total graph of a ring $R$ as follows.

\begin{Def}
 The total graph $\tau(R)$ of a  ring $R$ is the  graph, where
  \begin{itemize}
   \item the set of vertices $V(\tau(R))$ of the graph $\tau(R)$ is the set of all elements in $R$ and
   \item two distinct vertices $x$ and $y$ are adjacent if and only if $x + y$ is a (left or right) zero-divisor in $R$. 
 \end{itemize}
\end{Def}

Recall that in finite rings, every left zero-divisor is also a right zero-divisor and vice versa, so the 
definition of a total graph here coincides with the commutative version of definition in \cite{AndBad08}.
We limit our study to the total graphs of finite rings for the following lemma. 

\begin{Lemma}[\cite{ganesan,koh}]
 If $R$ is a ring with $m$ zero divisors, $2 \leq m < \infty$, then $R$ is a finite ring
 with $\vert R \vert \leq m^2$.
\end{Lemma}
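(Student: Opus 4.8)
The plan is to follow the classical approach via one-sided annihilators, taking care throughout to keep track of left versus right multiplication, since $R$ need not be commutative. I count $0$ among the zero-divisors (the standard convention in this line of work), so that the hypothesis $2 \le m$ simply says $R$ possesses at least one \emph{nonzero} zero-divisor; let $Z$ denote the set of all $m$ zero-divisors, and note that with this convention $0 \in Z$.

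First I would extract a concrete relation to work with. Since $R$ has a nonzero zero-divisor, there exist nonzero $a, b \in R$ with $ab = 0$ (if the witnessing element is only a right zero-divisor, relabel so that it plays the role of $b$). The engine of the argument is the right annihilator $\Ann_r(a) = \{x \in R : ax = 0\}$, an additive subgroup of $(R,+)$. Two observations drive everything. First, every nonzero element of $\Ann_r(a)$ is a zero-divisor, being killed on the left by $a \ne 0$, so $\Ann_r(a) \subseteq Z$ and hence $|\Ann_r(a)| \le m$. Second, using $ab = 0$ one gets $a(br) = (ab)r = 0$ for every $r \in R$, so the entire right ideal $bR$ sits inside $\Ann_r(a)$. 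Combining the two yields $|bR| \le |\Ann_r(a)| \le m$.

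Next I would bound $R$ itself via the additive homomorphism $\mu_b \colon R \to R$, $r \mapsto br$, whose image is $bR$ and whose kernel is the right annihilator $\Ann_r(b) = \{r \in R : br = 0\}$. As before, every nonzero element of $\Ann_r(b)$ is a zero-divisor, so $|\Ann_r(b)| \le m$. The first isomorphism theorem for additive groups gives $R/\Ann_r(b) \cong bR$, whence the index satisfies $[R : \Ann_r(b)] = |bR| \le m$. Since both the index $[R:\Ann_r(b)]$ and the subgroup $\Ann_r(b)$ are finite, $R$ is finite and $|R| = |bR|\cdot|\Ann_r(b)| \le m\cdot m = m^2$, as claimed.

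The only genuine subtlety, and the step I would watch most carefully, is the bookkeeping forced by non-commutativity: one must consistently use the right annihilators $\Ann_r(a)$ and $\Ann_r(b)$ (rather than mixing sides), so that both the inclusion $bR \subseteq \Ann_r(a)$ and the kernel computation for $\mu_b$ come out correctly. One must also be careful to \emph{deduce} the finiteness of $R$ from the finiteness of a subgroup together with its index, rather than presupposing it. Finally, should the available nonzero zero-divisor happen to be only a right zero-divisor, the mirror-image argument using left annihilators and the left ideal $Rb$ applies verbatim.
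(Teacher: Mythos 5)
Your proof is correct. Note that the paper does not actually prove this lemma---it is quoted from the cited references of Ganesan and Koh---and your argument via the two right annihilators $\Ann_r(a)$, $\Ann_r(b)$ and the additive homomorphism $r \mapsto br$ is essentially the classical one from those references, correctly adapted to the non-commutative setting and to the paper's convention that $0 \in Z(R)$.
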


So, if an infinite ring $R$ has more than one zero divisor, then it has infinitely many of them
and so the degree of each vertex in the total graph is infinite, which means that is it difficult (or perhaps
even meaningless) to study the graph theoretical properties such as being Eulerian, the domination
number, etc.  On the other hand, if an infinite ring $R$ has only one zero divisor, each $a \in R$ is either
an isolated vertex or adjacent only to $-a$, so the total graph is a disjoint union of infinitely many 
graphs isomorphic to $K_1$ or $K_2$.

\medskip

We will also often make use of the fact that the Jacobson radical of an Artinian (hence also finite) ring is nilpotent.

\bigskip

In the paper, we use the following notations.

For any ring $R$, we denote by $Z(R)$ the set of zero-divisors, $Z(R)=\{x \in R;$ there exists
$0 \neq y \in R \text { such that } xy=0 \text { or } yx=0 \}$, and by $R^*$ the set of all invertible 
elements of $R$.  By $J=J(R)$ we will denote the Jacobson ideal of the ring $R$.

We denote by $M_n(F)$ the set of all $n \times n$ matrices over a field $F$. The matrix with the only nonzero entry 1 in the $i$-th row and $j$-th column will be denoted by $E_{i,j}$ and
we will denote the zero matrix by $0$.

The sequence of edges $x_0 - x_1$, $ x_1 - x_2$, ..., $x_{k-1} - x_{k}$ in a graph is called \emph{a path of length $k$}. The \DEF{distance} between two vertices is the length of the shortest
path between them. The \DEF{diameter} $\diam{\Gamma}$ of the graph $\Gamma$ is the longest
 distance between any two vertices of the graph.
A path $x_0 - x_1 - \ldots - x_{k-1} - x_0$ is called a \emph{cycle}. 
A \emph{Hamiltonian path} of a graph $G$ is a path that contains every vertex of $G$ and a \emph{Hamiltonian cycle} of a graph $G$ is a cycle that contains every vertex of $G$. A graph is \emph{Hamiltonian}
if it contains a Hamiltonian cycle.  A graph $G$ is \emph{Eulerian}
if it contains a cycle that consists of all the edges of $G$.
A complete graph on $m$ vertices will be denoted by $K_m$, and a complete bipartite graph with the respective sets of sizes $m$ and $n$ will be denoted by $K_{m,n}$.

\bigskip

This paper is organised as follows. In the preliminary section, we recall some known results about 
total graphs on commutative rings and list some properties of  total graphs over non-commutative finite rings that can be proved by similar arguments as in the commutative case.
We also prove that the total graph of a local ring is not connected.

The methods we use in the remainder of the paper  differ  substantially from the ones used in studying the commutative case, where the ring decomposes as the product of local rings.
In Section 3, we generalize \cite[Theorem 3]{akbari} and prove that the total graph of a non-local (non-commutative) finite ring is Hamiltonian. In Section 4, we give the upper bound for a domination number of a finite ring and a conjecture about the exact value of a domination number for a certain class of finite rings.

%-----------------------------------------------------
%-----------------------------------------------------

\bigskip
\bigskip

\section{Preliminaries}

\bigskip

%-----------------------------------------------------
%-----------------------------------------------------

We shall need some well-known facts about rings: if $R$ is a semisimple Artinian ring, then
$R$ is a finite direct product of full matrix rings.  The following is also commonly known however, we include the proof for the sake of completeness.

\bigskip

\begin{Lemma}\label{lemma:finite}
 If $R$ is a finite ring then every $a\in R$  is either invertible or zero-divisor.
\end{Lemma}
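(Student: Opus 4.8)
The plan is to prove the contrapositive: if $a \in R$ is not a zero-divisor, then $a$ is invertible. Since $0$ is a zero-divisor in any nontrivial ring (we have $0\cdot y = 0$ for any $y \neq 0$), we may assume $a \neq 0$, and by hypothesis $a$ is then neither a left nor a right zero-divisor. The whole argument will turn on the elementary principle that an injective self-map of a finite set is automatically surjective.

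First I would exploit the finiteness of $R$ through the left-multiplication map $L_a \colon R \to R$ given by $L_a(x) = ax$. Because $a$ is not a left zero-divisor, $L_a$ is injective: $ax = ay$ forces $a(x-y) = 0$ and hence $x = y$. As $R$ is finite, injectivity of $L_a$ gives surjectivity, so $1$ lies in the image of $L_a$, i.e.\ there is some $b \in R$ with $ab = 1$. Thus $a$ has a right inverse, and it remains only to turn this one-sided inverse into a genuine one.

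I expect this promotion from a right inverse to a two-sided inverse to be the one step needing care, and it is exactly where the non-zero-divisor hypothesis is used a second time (beyond mere finiteness). For this I would compute $a(ba - 1) = (ab)a - a = a - a = 0$. Since $a$ is again not a left zero-divisor, the right-hand factor must vanish, so $ba - 1 = 0$; hence $ba = 1 = ab$ and $a \in R^*$, which completes the proof.

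As an alternative avoiding the right-inverse discussion, one can argue purely with powers: finiteness forces $a^m = a^n$ for some $m > n \geq 1$, whence $a^n(a^{m-n} - 1) = 0$. The powers of a non-left-zero-divisor are again non-left-zero-divisors (an easy induction, since $a^{k+1}c = 0$ gives $a(a^k c) = 0$, then $a^k c = 0$), so $a^n$ is not a left zero-divisor and therefore $a^{m-n} - 1 = 0$. Then $a^{m-n} = 1$ shows that $a^{\,m-n-1}$ is a two-sided inverse of $a$. Either route is routine once the injective-implies-surjective principle on the finite set $R$ is in hand.
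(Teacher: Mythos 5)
Your proposal is correct, and your primary argument takes a genuinely different route from the paper's. The paper argues directly via stabilization of powers: finiteness gives $a^k = a^{k+l}$ with $k$ minimal, hence $a^k(a^l-1)=0$, and then either $a^l=1$ (so $a^{l-1}$ is an inverse) or, by minimality of $k$, the element $a^{k-1}(a^l-1)$ is nonzero and annihilated by $a$, exhibiting $a$ as a zero-divisor. Your main proof instead runs the contrapositive through the left-multiplication map $L_a$: injectivity from the non-zero-divisor hypothesis, surjectivity from finiteness, and then a short computation ($a(ba-1)=0$) to upgrade the right inverse $ab=1$ to a two-sided one. Both are sound; your pigeonhole route actually yields a slightly stronger statement --- that in a finite ring any element which is merely not a \emph{left} zero-divisor is already two-sided invertible --- which incidentally re-proves the fact the paper cites without proof, namely that left and right zero-divisors coincide in finite rings. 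The paper's power argument, on the other hand, is more self-contained and constructive in both branches of the dichotomy, producing explicitly either the inverse or the annihilated element. Note also that your alternative sketch at the end (powers of a non-left-zero-divisor are non-left-zero-divisors, so $a^{m-n}=1$) is essentially the paper's own argument read contrapositively, so the two proofs are closer than they first appear; only your primary route is a real departure.
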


\begin{proof}
  Choose any $a \in R$. Since $R$ is finite, there exist integers $k,l>0$ such
  that $a^k=a^{k+l}$. Choose smallest such $k$. Then 
  $a^k(a^l-1)=0$ and either $a^l=1$ (so $a$ is invertible) or $aa^{k-1}(a^l-1)=0$ (so
  $a$ is a zero divisor).
\end{proof}

We shall often use the properties of the factor ring over the Jacobson ideal and thus the following lemma will be useful.

\bigskip

\begin{Lemma}\label{lemma:R/J}
 For every finite ring $R$ we have
  $$a+b \in Z(R) \, \text{ if and only if } \, (a+J)+(b+J) \in Z(R/J).$$
\end{Lemma}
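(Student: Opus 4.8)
The plan is to reduce the claimed equivalence to the standard fact that units lift modulo the Jacobson radical. Writing $c = a+b$, note that $(a+J)+(b+J) = c + J$, so the lemma is really the assertion that $c \in Z(R)$ if and only if $c + J \in Z(R/J)$; the additive structure plays no further role. Since $R$ is finite, so is the factor ring $R/J$, and hence Lemma~\ref{lemma:finite} applies to both rings: every element is either invertible or a zero-divisor. Moreover these alternatives are exclusive, since if $c \in R^*$ and $cy = 0$ (or $yc = 0$) then $y = c^{-1}cy = 0$; thus in each of the two finite rings the set of zero-divisors is exactly the complement of the group of units. Consequently the lemma is equivalent to the statement
$$ c \in R^* \iff c + J \in (R/J)^*. $$

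First I would dispose of the easy implication. If $c$ is invertible in $R$, then $c+J$ is invertible in $R/J$ with inverse $c^{-1} + J$, because the canonical projection $R \to R/J$ is a ring homomorphism. Contrapositively, if $c + J$ is a zero-divisor in $R/J$, then it is not a unit there, so $c$ cannot be a unit in $R$, and Lemma~\ref{lemma:finite} forces $c \in Z(R)$.

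The substance is the converse: I would show that if $c + J$ is invertible in $R/J$ then $c$ is already invertible in $R$. Choose $d \in R$ whose image inverts $c + J$; then $cd - 1 \in J$ and $dc - 1 \in J$, say $cd = 1 + j$ and $dc = 1 + j'$ with $j, j' \in J$. Here I would invoke the fact, recalled in the introduction, that the Jacobson radical of a finite ring is nilpotent: if $j^n = 0$ then $1 + j$ is invertible, with inverse $1 - j + j^2 - \cdots + (-1)^{n-1} j^{n-1}$, and likewise for $1 + j'$. Hence $cd$ and $dc$ are both units of $R$, so $c$ possesses both a right inverse $d(cd)^{-1}$ and a left inverse $(dc)^{-1} d$; these necessarily coincide and exhibit $c$ as a unit. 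Combining this with the easy implication gives the displayed equivalence, and the reformulation above then yields the lemma.

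The main obstacle is exactly this lifting step, and it is the only place where finiteness (through nilpotency of $J$) is essential: without it, an element could project to a unit while remaining a non-unit. Everything else is bookkeeping with Lemma~\ref{lemma:finite} and the homomorphism property of the projection $R \to R/J$.
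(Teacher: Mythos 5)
Your proof is correct and rests on essentially the same two ingredients as the paper's: the invertible-or-zero-divisor dichotomy of Lemma \ref{lemma:finite} applied to both $R$ and $R/J$, and the fact that every element of $1+J$ is a unit. The difference is only in packaging --- you isolate the core as an explicit unit-lifting statement ($c \in R^*$ iff $c+J \in (R/J)^*$) and prove the $1+J$ fact via nilpotency of $J$ and a geometric series, whereas the paper argues both directions by contradiction with explicit zero-divisor witnesses and simply asserts from finiteness that $1+J$ consists of invertible elements.
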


\begin{proof}
If $a+b \in Z(R)$, there exists a nonzero $c \in R$ such that $(a+b)c=0$, and thus 
$((a+J)+(b+J))(c+J)=J$.  So, either $(a+J)+(b+J)$ is a zero-divisor in $R/J$ or it is invertible. If $(a+J)+(b+J)$ is invertible, there exists $u \in R$ such that $(u+J)((a+J)+(b+J))=1+J$,
or equivalently $u(a+b) \in 1+J$. Since $R$ is finite, all elements in $1+J$ are invertible, so $u(a+b)$ is an invertible element in $R$, which contradicts
the assumption $a+b \in Z(R)$. Thus, $(a+J)+(b+J) \in Z(R/J)$.

If $(a+J)+(b+J) \in Z(R/J)$, there exists  $c \in R \backslash J$ such that 
$((a+J)+(b+J))(c+J)=J$ and thus $(a+b)c \in J$. If $a+b$ is invertible in $R$, there exists $u$ such that
$u(a+b)=1$ and therefore  $c= u(a+b)c \in J$, a contradiction. Thus,  $a+b$ is not invertible, so  $a+b \in Z(R)$.
\end{proof}

\bigskip

The generalisation of Theorems 2.1 and 2.2 from \cite{AndBad08} to the non-commutative rings can be easily proved by Lemma \ref{lemma:R/J}, as we show in the following proposition.

\bigskip

\begin{Proposition}\label{thm:local}
If  $R$ is a local ring, then $\tau(R)$ is not connected. 

Moreover, $\tau(R)$ is isomorphic to the union of $|R/Z(R)|$ copies of $K_{|Z(R)|}$, if $\ch(R)= 2^k$, 
and $\tau(R)$ is isomorphic to the union of $K_{|Z(R)|}$ and  $\frac{1}{2}(|R/Z(R)|-1)$ copies of 
$K_{|Z(R)|,|Z(R)|}$ otherwise.
\end{Proposition}

\begin{proof}
If  $R$ is a local ring, then $J=Z(R)$ and thus $F=R/Z(R)$ is a field. So, distinct vertices $a$ and $b$ are connected by an edge in $\tau(F)$ if and only if $b=-a$. Thus, if  $\ch(R)=2^k$, then $\ch(F)=2$ and therefore
 $\tau(F)$ consists of $|F|$ components equal to $K_1$. Otherwise, if $\ch(R)\ne 2^k$, then $\ch(F)\ne 2$ and so  $\tau(F)$ consists 
of $\frac{1}{2} (|F|+1)$ components, one of them corresponding to isolated vertex $0$, others equal to $K_2$. By  Lemma \ref{lemma:R/J},  the proposition follows.
\end{proof}

\bigskip

Some properties of the total graph over a non-commutative finite ring can be proved by the same arguments as in the commutative case. For example,  the arguments in the proofs of 
Theorems 2.7 and 3.3  from \cite{Shekarriz}
and Theorem 3.3 and Theorem 3.4 from \cite{AndBad08} are valid also in the non-commutative case, see Lemmas \ref{lem1}, \ref{lem2} and \ref{lem3}.

\bigskip

\begin{Lemma}[\cite{Shekarriz}]\label{lem1}
 Let $R$ be a finite ring.
 \begin{enumerate}
  \item[(a)] If $|R|$ is even, then $\tau(R)$ is a $(|Z(R)|-1)$-regular graph.
  \item[(b)] If $|R|$ is odd, then $\deg(a)=|Z(R)|$ if $a \in R^*$ and $\deg(a)=|Z(R)|-1$ if $a \in Z(R)$.
 \end{enumerate} 
\end{Lemma}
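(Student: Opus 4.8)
The plan is to compute $\deg(a)$ directly for a fixed vertex $a \in R$ and then read off both statements from a single parity dichotomy. By definition, $\deg(a)$ is the number of $b \in R$ with $b \neq a$ and $a + b \in Z(R)$. Since the map $b \mapsto a + b$ is a bijection of $R$ onto itself, exactly $|Z(R)|$ elements $b \in R$ satisfy $a + b \in Z(R)$ (note $0 \in Z(R)$, so $b = -a$ is always among them). The only element that might have to be discarded from this count is $b = a$ itself, and this happens precisely when $a + a = 2a \in Z(R)$. Hence
$$\deg(a) = \begin{cases} |Z(R)| - 1, & 2a \in Z(R), \\ |Z(R)|, & 2a \notin Z(R). \end{cases}$$
So the whole problem reduces to deciding, for each $a$, whether $2a$ is a zero-divisor.

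Next I would pin down the status of the central element $2 = 1 + 1$ using the parity of $|R|$. Consider the additive endomorphism $x \mapsto 2x$ of the finite abelian group $(R, +)$, which has order $|R|$. If $|R|$ is odd, then $(R,+)$ has no element of additive order $2$, so this map is injective and hence bijective; in particular $2y = 1$ for some $y$, and since $2$ is central this makes $2$ invertible. If $|R|$ is even, Cauchy's theorem furnishes a nonzero element of additive order $2$, so $x \mapsto 2x$ has nontrivial kernel and $2$ is not invertible; by Lemma \ref{lemma:finite} it follows that $2 \in Z(R)$.

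With this in hand the two cases follow. When $|R|$ is even I would argue that $2a \in Z(R)$ for every $a$: if $2a$ were invertible, then from $u(2a) = 1$ and the centrality of $2$ we would get $2(ua) = 1$, making $2$ a unit, contrary to the previous paragraph; so $2a$ is a non-unit and Lemma \ref{lemma:finite} gives $2a \in Z(R)$. Thus $\deg(a) = |Z(R)| - 1$ uniformly, which is (a). When $|R|$ is odd, $2$ is a unit, so the identity $a = 2^{-1}(2a)$ shows that $2a$ is a unit if and only if $a$ is; equivalently $2a \in Z(R)$ if and only if $a \in Z(R)$, using that in a finite ring the units and the zero-divisors partition $R$ (Lemma \ref{lemma:finite}). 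Substituting into the degree formula gives $\deg(a) = |Z(R)|$ for $a \in R^*$ and $\deg(a) = |Z(R)| - 1$ for $a \in Z(R)$, which is (b).

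The main point requiring care is the non-commutative bookkeeping around the element $2$. Although $2 = 1 + 1$ is central, I must ensure that the implications ``$2a$ is a unit $\Leftrightarrow$ $a$ is a unit'' and ``$a \in Z(R) \Rightarrow 2a \in Z(R)$'' are established using only one-sided relations that remain valid here, and that the passage between \emph{invertible} and \emph{not a zero-divisor} is justified throughout by Lemma \ref{lemma:finite} (in a finite ring these alternatives are exhaustive and mutually exclusive, since a unit cannot annihilate a nonzero element on either side). Once the centrality of $2$ and this dichotomy are used consistently, the remaining counting argument is routine.
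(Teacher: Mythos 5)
Your proof is correct and takes essentially the approach the paper relies on: the paper gives no explicit proof of this lemma, citing \cite{Shekarriz} with the remark that the commutative argument remains valid, and your argument --- counting neighbours of $a$ via the bijection $b \mapsto a+b$, reducing everything to whether $2a \in Z(R)$, and settling the status of the central element $2$ by the parity of $|R|$ --- is exactly that standard argument. The extra care you take with the non-commutative bookkeeping (centrality of $2$, one-sided inverses being two-sided in a finite ring, and the unit/zero-divisor dichotomy of Lemma \ref{lemma:finite}) is precisely what is needed to justify the paper's claim that the commutative proof carries over.
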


\bigskip

\begin{Lemma}[\cite{Shekarriz}]\label{lem2}
 If $R$ is a finite ring, $\tau(R)$ is Eulerian if and only if $R$ is isomorphic to a direct product of at least two finite fields of even orders.
\end{Lemma}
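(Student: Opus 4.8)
The plan is to invoke the classical criterion that a finite graph admits an Euler circuit if and only if it is connected (after deleting isolated vertices) and every vertex has even degree, and then to translate both conditions into ring-theoretic statements using Lemma \ref{lem1} and the Wedderburn structure of $R/J$. The degree conditions will be handled by a parity analysis of $|Z(R)|$, $|R^*|$ and $|(R/J)^*|$, while connectivity in the nontrivial direction will come from a direct distance estimate.

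For the forward implication, suppose $\tau(R)$ is Eulerian. First I would rule out $|R|$ odd: by Lemma \ref{lem1}(b) the unit $1$ has degree $|Z(R)|$ while $0 \in Z(R)$ has degree $|Z(R)|-1$, and two consecutive integers cannot both be even, so some vertex has odd degree. Hence $|R|$ is even and, by Lemma \ref{lem1}(a), $\tau(R)$ is $(|Z(R)|-1)$-regular; evenness of the degree forces $|Z(R)|$ to be odd. Since $R = R^* \sqcup Z(R)$ by Lemma \ref{lemma:finite}, this makes $|R^*|$ odd. Now I would factor $|R^*| = |J|\,|(R/J)^*|$ (the reduction $R^* \to (R/J)^*$ is onto with kernel $1+J$ because $J$ is nilpotent), so both $|J|$ and $|(R/J)^*|$ are odd. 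Writing $R/J \cong \prod_{i} M_{n_i}(F_i)$ and computing $|(R/J)^*| = \prod_i |GL_{n_i}(F_i)|$, a short parity computation from $|GL_n(q)| = q^{n(n-1)/2}\prod_{j=1}^n (q^j-1)$ shows this order is odd exactly when $n=1$ and $q$ is even. Hence every factor is a field $F_i$ of even order, and $R/J \cong \prod_i F_i$ has characteristic $2$.

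The remaining obstacle in the forward direction is to promote this to $R$ itself, i.e.\ to show $J=0$. Here I would use that $\ch(R/J)=2$ gives $2 \in J$; since $J$ is nilpotent, $2$ is nilpotent in $R$, so $\ch(R)$ is a power of $2$ and therefore $(R,+)$ is a $2$-group and $|R|$ is a power of $2$. Then $|J|$ divides $|R|$ and is both odd and a power of $2$, forcing $|J|=1$ and $R \cong \prod_i F_i$ with each $F_i$ of even order. Finally, if there were a single factor then $R$ would be a field of even order, hence local; by Proposition \ref{thm:local} its total graph would be a disjoint union of isolated vertices, which has no edges and so is not Eulerian. Therefore the number of factors is at least two.

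For the converse, let $R \cong \prod_{i=1}^t F_i$ with $t \geq 2$ and each $|F_i|$ even. Then $|R^*| = \prod_i(|F_i|-1)$ is odd while $|R|$ is even, so $|Z(R)| = |R|-|R^*|$ is odd and Lemma \ref{lem1}(a) makes $\tau(R)$ regular of even degree $|Z(R)|-1$. It remains to check connectivity; since $\ch(R)=2$, two distinct vertices are adjacent precisely when they agree in some coordinate, and for any $u,v$ the tuple $w=(u_1,v_2,\dots,v_t)$ yields a path $u - w - v$ (collapsing to a direct edge in the degenerate cases $w=u$ or $w=v$), so $\tau(R)$ has diameter at most $2$ and is connected. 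By the Euler criterion $\tau(R)$ is therefore Eulerian. I expect the main difficulty to be the structural forward step $J=0$ above, together with getting the $|GL_n(q)|$ parity computation and the trivial small-order (no-edge) cases exactly right.
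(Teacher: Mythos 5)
Your proof is correct, but it is not the paper's route, because the paper in fact offers no proof of this lemma at all: it simply cites \cite{Shekarriz} and asserts that the commutative argument (Theorem 3.3 there) ``remains valid'' for non-commutative $R$. That commutative proof shares your graph-theoretic skeleton --- Euler's criterion together with the degree formulas of Lemma \ref{lem1} --- but its ring-theoretic half rests on writing a finite commutative ring as a product of local rings, a decomposition that is unavailable in the non-commutative setting. What you supply is precisely the adaptation the paper's remark glosses over: you replace the local decomposition by Jacobson-radical and Wedderburn machinery (surjectivity of $R^* \to (R/J)^*$ with kernel $1+J$, so $|R^*|=|J|\,|(R/J)^*|$; the parity computation for $|GL_n(q)|$ showing each Wedderburn factor must be a field of even order; and the neat observation that $|J|$ is simultaneously odd and a power of $2$, hence $J=0$), which is very much in the spirit of the tools this paper itself develops elsewhere (Lemma \ref{lemma:R/J}, Proposition \ref{thm:R/J}). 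Your version buys a self-contained, genuinely non-commutative proof; the paper's citation buys brevity at the cost of leaving the transfer unverified. Two small points to tighten: in the $|R|$-odd case you should note that the odd-degree vertex you produce has \emph{positive} degree (degree-zero vertices do not obstruct Eulerianity); this holds because $0\in Z(R)$ gives $|Z(R)|\ge 1$, so either the units have odd degree $|Z(R)|\ge 1$ or else $|Z(R)|\ge 2$ is even and the zero-divisors have odd degree $|Z(R)|-1\ge 1$. Also, your exclusion of a single field factor relies on the convention that an edgeless graph is not Eulerian; that is indeed the convention the lemma's statement forces, but it is worth saying explicitly.
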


\bigskip

\begin{Lemma}[\cite{AndBad08}]\label{lem3}
 If $R$ is a finite ring, then 
  $$\diam{R} = \begin{cases}
     \infty, & \text{if }  R \text{ is  local,}\\
     2, & \text{if } R \text{ is not local.}
     \end{cases}$$
\end{Lemma}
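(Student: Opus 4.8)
The plan is to separate the two cases in the statement. If $R$ is local, Proposition~\ref{thm:local} already shows that $\tau(R)$ is a disjoint union of at least two complete or complete bipartite graphs, hence disconnected; two vertices lying in different components have no connecting path, so $\diam{\tau(R)}=\infty$. From now on assume $R$ is not local. I would first dispose of the lower bound: the distinct vertices $0$ and $1$ satisfy $0+1=1\in R^*$, which is not a zero-divisor, so they are non-adjacent; once I prove that the graph is connected with short paths this forces $\diam{\tau(R)}\ge 2$.

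The heart of the argument is the upper bound $\diam{\tau(R)}\le 2$, i.e.\ that any two distinct vertices $x,y$ admit a common neighbour (or are already adjacent). Here Lemma~\ref{lemma:R/J} is decisive: whether $x$ and $z$ are adjacent depends only on their images $\bar x,\bar z$ in $S:=R/J$, and by Lemma~\ref{lemma:finite} the finite ring $S$ satisfies $Z(S)=S\setminus S^*$. Hence it is enough to find $c\in S$ with $\bar x+c,\ \bar y+c\in Z(S)$ and then lift $c$ to a vertex. Setting $d=\bar x-\bar y$ and looking for $c=s-\bar x$, this amounts to writing $d=s-t$ as a difference of two non-units $s,t$ of $S$, for then $\bar x+c=s$ and $\bar y+c=t$ are both non-units.

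Everything therefore reduces to the following: in a finite semisimple ring $S$ that is not a division ring, every element is a difference of two non-units. By the Wedderburn--Artin theorem $S\cong\Mn{n_1}{F_1}\times\cdots\times\Mn{n_k}{F_k}$, and the hypothesis that $R$ is not local says precisely that $S$ is not a division ring, i.e.\ $k\ge 2$ or some $n_i\ge 2$. If $k\ge 2$ the claim is immediate: for $d=(d_1,\dots,d_k)$ take $s=(d_1,\dots,d_{k-1},0)$ and $t=(0,\dots,0,-d_k)$, both non-units since each has a zero coordinate, and $s-t=d$. The case I expect to be the main obstacle is a single block $S=\Mn{n}{F}$ with $n\ge 2$, where I must write an arbitrary matrix $d$ as a difference of two singular matrices: if $d$ is singular take $s=0$, while if $d$ is invertible I would set $s=d\,v$ with $v=\mathrm{diag}(0,1,0,\dots,0)$, so that $\det s=\det d\cdot\det v=0$ and $\det(s-d)=\det d\cdot\det(v-I)=0$, both vanishing because $v$ and $v-I$ each have a zero diagonal entry. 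This is exactly the step that uses $n\ge 2$.

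Finally I would lift: any preimage $z\in R$ of $c$ satisfies $x+z,\ y+z\in Z(R)$ by Lemma~\ref{lemma:R/J}, so $x-z-y$ is a path of length $2$. The one routine point that remains is to make sure $z\ne x,y$ so that the path is genuine; since by Lemma~\ref{lemma:R/J} every element of the coset $z+J$ is a common neighbour of $x$ and $y$, and since the decomposition $d=s-t$ can be varied, one can always choose the intermediate vertex outside $\{x,y\}$. Together with the lower bound coming from $0$ and $1$, this gives $\diam{\tau(R)}=2$ whenever $R$ is not local, completing the proof.
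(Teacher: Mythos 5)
Your proof is correct, but note that the paper does not actually prove Lemma~\ref{lem3} at all: it merely asserts, citing \cite{AndBad08}, that the arguments of Theorems 3.3 and 3.4 there remain valid for non-commutative finite rings. Your argument is thus a self-contained substitute, and its structure makes explicit exactly what must change in the non-commutative setting. The Anderson--Badawi route reduces $\diam{\tau(R)}\le 2$ to the statement that every element is a sum (equivalently, a difference) of two zero-divisors; in the finite \emph{commutative} case this is read off from the decomposition of a non-local ring as a direct product of at least two local rings. That decomposition is unavailable for, say, $M_n(F)$, and your replacement --- pass to $S=R/J$ via Lemmas~\ref{lemma:R/J} and \ref{lemma:finite}, invoke Wedderburn--Artin, handle $k\ge 2$ by zeroing out coordinates, and handle a single block $M_n(F)$, $n\ge 2$, by writing an invertible $d$ as $dv-d(v-I)$ with $v=\mathrm{diag}(0,1,0,\ldots,0)$ --- is precisely the kind of argument the paper itself deploys in its Hamiltonicity and domination sections (reduce modulo $J$, then Wedderburn, then explicit matrix work). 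The local case and the lower bound via the non-adjacent pair $0,1$ are handled exactly as the paper would via Proposition~\ref{thm:local}.

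The one step you should spell out is $z\notin\{x,y\}$, and it closes cleanly: when $x$ and $y$ are non-adjacent, $\bar x+\bar y$ is a unit of $S$, so no admissible $s$ (a non-unit) can equal $\bar x+\bar y$; hence $c=s-\bar x=\bar y$ never occurs and no lift of $c$ can equal $y$. The only possible collision is $z=x$, which forces $s=2\bar x$, a single value of $s$; it is avoided either by choosing a different lift inside the coset $c+J$ (when $J\ne 0$, since $|c+J|\ge 2$ and at most one element of the coset is $x$), or, when $J=0$, by a second admissible decomposition, which always exists because $d\ne 0$: in the product case take the complementary set of coordinates, in the matrix case take $v=\mathrm{diag}(1,0,\ldots,0)$ instead when $d$ is invertible, or $s\in\{0,d\}$ when $d$ is singular. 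With that sentence added, your proof is complete.
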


%-----------------------------------------------------
%-----------------------------------------------------
\bigskip
\bigskip
\section{Total graph of a non-local ring is Hamiltonian}
\bigskip

The total graph of a finite local ring is disconnected by Proposition \ref{thm:local}. So, in order to study the existence of a Hamlitonian cycle, we have to limit ourselves to the non-local case.  In this section, we shall prove that the total graph of any non-local finite ring is Hamiltonian.  In the course of the proof, we will use the following notations.

First, we define the index sets 
$$\II_{k,l} =\{ (i,j); i=k  \text{ and } j<l, \text { or } i<k  \}, \quad \overline{\II_{k,l}} =\II_{k,l} \cup \{(k,l)\}$$
and
$$\underline{\II_{k,l}}= \begin{cases}
 \II_{k,l} \backslash \{(k-1,n)\}, & \text{if } l=1,\\
 \II_{k,l} \backslash \{(k,l-1)\}, & \text{if } l\ne 1. 
\end{cases}$$

Next, we define the sets of matrices corresponding to these index sets, 
$$\AAA_{k,l}=\{[a_{ij}] \in M_n(F); \; a_{i,j}=0 \text{ if } (i,j) \notin \II_{k,l} \} \subseteq V(\tau(M_n(F)))$$
and
$$\overline{\AAA_{k,l}}=\{[a_{ij}] \in M_n(F); \; a_{i,j}=0 \text{ if } (i,j) \notin \overline{\II_{k,l}} \} \subseteq V(\tau(M_n(F))).$$

\medskip

Note that  $\{0\}=\AAA_{1,1} \subseteq \AAA_{1,2} \subseteq \ldots \subseteq \AAA_{1,n} \subseteq \AAA_{2,1} \subseteq\ldots \subseteq \AAA_{n,n} \subseteq \overline{\AAA_{n,n}}=M_n(F)$.

Finally, for any $c \in F$, we shall also make use of the following sets,

$$\CCC_{k,l}(c)=\{[a_{ij}] \in M_n(F); \; a_{i,j}=c \text{ if } (i,j) \in \II_{k,l} \},$$
$$\overline{\CCC_{k,l}}(c)=\{[a_{ij}] \in  M_n(F); \; a_{i,j}=c \text{ if } (i,j) \in \overline{\II_{k,l}} \}$$
and
$$\underline{\CCC_{k,l}}(c)=\{[a_{ij}] \in  M_n(F); \; a_{i,j}=c \text{ if } (i,j) \in \underline{\II_{k,l}} \}.$$

\bigskip

The following two lemmas will be crucial in proving that the total graph of a full matrix ring over a field is Hamiltonian.

\bigskip

\begin{Lemma}\label{thm:Ham0}
Let  $\ch(F) = 2$ and $1 \leq k, l \leq n$.
 If  there exists a Hamiltonian path on $\AAA_{k,l}$ with the first vertex 0 and the last vertex in $\underline{\CCC_{k,l}}(0)$, then there exists
 a Hamiltonian path on $\overline{\AAA_{k,l}}$ with  the first vertex 0 and the last vertex in $\CCC_{k,l}(0)$.
\end{Lemma}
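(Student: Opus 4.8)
The plan is to use the hypothesis $\ch(F)=2$ to cut $\overline{\AAA_{k,l}}$ into $|F|$ parallel copies of $\AAA_{k,l}$ and then stitch the lifted Hamiltonian paths together into a single path. First I would write $\overline{\AAA_{k,l}}$ as the disjoint union of the slices $S_c=\AAA_{k,l}+cE_{k,l}$ for $c\in F$, where $S_c$ is the set of matrices whose $(k,l)$-entry equals $c$ and whose remaining support lies in $\II_{k,l}$; in particular $S_0=\AAA_{k,l}$. The crucial observation is that for $A,B\in\AAA_{k,l}$ one has $(A+cE_{k,l})+(B+cE_{k,l})=(A+B)+2cE_{k,l}=A+B$ since $\ch(F)=2$, so two vertices of $S_c$ are adjacent in $\tau(M_n(F))$ exactly when the corresponding vertices of $\AAA_{k,l}$ are. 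Hence the translation $A\mapsto A+cE_{k,l}$ is an isomorphism of the induced subgraphs, and the given Hamiltonian path on $\AAA_{k,l}$ lifts to a Hamiltonian path on each slice $S_c$, running from its start vertex $cE_{k,l}$ to its end vertex $v+cE_{k,l}$, where $v\in\underline{\CCC_{k,l}}(0)$ is the prescribed last vertex of the original path.

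Next I would produce the edges needed to switch between slices. Using $\ch(F)=2$ once more, for $c\neq c'$ both the sum $cE_{k,l}+c'E_{k,l}=(c+c')E_{k,l}$ of two start vertices and the sum $(v+cE_{k,l})+(v+c'E_{k,l})=(c+c')E_{k,l}$ of two end vertices equal a rank-one matrix, hence are singular (recall $n\ge 2$, as otherwise $M_n(F)$ is a field and therefore local), so both are genuine edges of $\tau(M_n(F))$. Note that the term involving $v$ cancels automatically in characteristic two, so these jumps are available regardless of the precise shape of $v$.

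I would then snake through the slices. Traverse $S_0$ forward from $0$ to $v$; use an edge joining two end vertices to jump to the end vertex of some slice $S_{c_1}$ and traverse $S_{c_1}$ backward to its start vertex $c_1E_{k,l}$; use an edge joining two start vertices to jump to $c_2E_{k,l}$ and traverse $S_{c_2}$ forward to its end vertex; and so on, alternating the direction of traversal and the type of jump, until every slice has been used exactly once. Since the slices are disjoint and each lifted path visits its slice completely, no vertex is repeated, and the total number of vertices visited is $|F|\cdot|\AAA_{k,l}|=|\overline{\AAA_{k,l}}|$, so the result is a Hamiltonian path on $\overline{\AAA_{k,l}}$.

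Finally I would verify the endpoints. The path begins at $0$ by construction. Because $\ch(F)=2$ forces $|F|$ to be even, there is an even number of slices, so the last slice is one traversed backward; it therefore ends at a start vertex $cE_{k,l}$, all of whose entries on $\II_{k,l}$ vanish, so it lies in $\CCC_{k,l}(0)$, as required. I expect the main obstacle to be exactly this parity bookkeeping: one must order the slices and fix the forward/backward pattern so that the two endpoint conditions (first vertex $0$, last vertex in $\CCC_{k,l}(0)$) hold simultaneously, and this is precisely where the evenness of $|F|$ is indispensable. Everything else reduces to the two characteristic-two identities above.
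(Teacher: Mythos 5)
Your proposal is correct and is essentially the paper's own proof: the paper likewise partitions $\overline{\AAA_{k,l}}$ into the $|F|$ translates $\AAA_{k,l}+cE_{k,l}$, lifts the given path to each translate using the characteristic-two cancellation, and snakes through them alternating direction, with start-to-start and end-to-end jump edges whose sums are the singular rank-one matrices $(c+c')E_{k,l}$, the oddness of $|F|-1$ (equivalently evenness of $|F|$) forcing the final vertex to be a start vertex in $\CCC_{k,l}(0)$. Your side remark that the hypothesis on the last vertex lying in $\underline{\CCC_{k,l}}(0)$ is never actually needed in characteristic two (it only matters for chaining the induction in the Hamiltonicity proof) is also consistent with the paper's argument.
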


\begin{proof}
 Let $F=\{0,x_1,\ldots,x_m\}$.
Since $\ch(F)=2$, $m$ is odd and we can extend the Hamiltonian  path $0,B_1,\ldots, B_{t}$ on $\AAA_{k,l}$ by the path 
\begin{center}
\begin{tabular}{ccccccc}
   $x_1 E_{k,l}$ &\, \textemdash \,&$(B_1+x_1 E_{k,l})$  &\, \textemdash \,&\ldots  &\, \textemdash \, &$(B_t+x_1 E_{k,l})$ \\
  $ |$ & &  & &  & & \\
   $x_2 E_{k,l}$  &\, \textemdash \,&$(B_1+x_2 E_{k,l})$ &\, \textemdash \,&\ldots  &\, \textemdash \,&$(B_t+x_2 E_{k,l})$\\
    & &  & &  &  &$|$ \\
   $x_3 E_{k,l}$ &\, \textemdash \,&$(B_1+x_3 E_{k,l})$  &\, \textemdash \,&\ldots  &\, \textemdash \, &$(B_t+x_3 E_{k,l})$ \\
   $|$ & &  & &  & & \\
   \vdots  &&\vdots && \vdots  & &\vdots\\
   & &  & &  & & $|$\\ 
   $x_m E_{k,l}$  &\, \textemdash \,&$(B_1+x_m E_{k,l})$  &\, \textemdash \,&\ldots  &\, \textemdash \,&$(B_t+x_m E_{k,l})$
 \end{tabular}
 \end{center}
 starting in $B_t+x_1 E_{k,l}$ and ending in $x_m E_{k,l}\in \CCC_{k,l}(0)$.
\end{proof}

\bigskip

\begin{Lemma}\label{thm:Ham}
 Let  $\ch(F) \neq 2$ and $1 \leq k, l \leq n$. If there exists a Hamiltonian path on $\AAA_{k,l}$ with the first vertex 0 and the last vertex in $\CCC_{k,l}(c)$ for some
$c \in F^*$, then there exists
 a Hamiltonian path on $\overline{\AAA_{k,l}}$ with  the first vertex 0 and the last vertex in $\overline{\CCC_{k,l}}(d)$ for
 some $d \in F^*$.
\end{Lemma}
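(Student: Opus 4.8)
The plan is to build the required path by decomposing $\overline{\AAA_{k,l}}$ into the $|F|$ parallel layers $L_x=\AAA_{k,l}+xE_{k,l}$, $x\in F$, each a translate of $\AAA_{k,l}$, so that every vertex of $\overline{\AAA_{k,l}}$ is uniquely $B+xE_{k,l}$ with $B\in\AAA_{k,l}$. First I note that the hypothesis actually pins the last vertex down: inside $\AAA_{k,l}$ the set $\CCC_{k,l}(c)$ is the single matrix $B_t$ carrying the value $c$ at every position of $\II_{k,l}$ and $0$ elsewhere, where $0=B_0,B_1,\ldots,B_t$ is the given path. I will aim to finish the new path at $B_t+cE_{k,l}\in\overline{\CCC_{k,l}}(c)$, so the promised $d$ is just $c$. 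The whole problem then becomes: stitch the layers into one path from $0\in L_0$ to $B_t+cE_{k,l}\in L_c$, using the given path to traverse $L_0$. (If $k<n$ this is immediate, since every matrix of $\overline{\AAA_{k,l}}$ has a zero row and the induced graph is complete; the content is the case $k=n$.)

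The key point, and the only place where $\ch(F)\neq2$ is used, is an \emph{opposite-shift cancellation}. For $x\neq0$ and any indices $i,j$,
$$(B_i+xE_{k,l})+(B_j-xE_{k,l})=B_i+B_j,$$
so a vertex of $L_x$ and a vertex of $L_{-x}$ are adjacent in $\tau(M_n(F))$ \emph{exactly} when $B_i+B_j\in Z(M_n(F))$, i.e.\ exactly when the corresponding vertices are adjacent in the graph induced on $\AAA_{k,l}$. Writing $u_i=B_i+xE_{k,l}$ and $w_i=B_i-xE_{k,l}$, the two staircases $u_0-w_1-u_2-w_3-\cdots$ and $w_0-u_1-w_2-u_3-\cdots$ are then valid paths whose vertex sets together exhaust $L_x\cup L_{-x}$, and the edge $u_0-w_0$ (with sum $2B_0=0$) glues them into a single path covering the layer-pair; a short parity check shows its two ends are $B_t+xE_{k,l}$ and $B_t-xE_{k,l}$. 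Crucially every edge used inside a layer-pair reduces to a genuine adjacency of the given path, so no determinant computation is needed there, in contrast with the naive ``shifted snake'' of Lemma \ref{thm:Ham0}, which is destroyed by the term $2xE_{k,l}$ when $k=n$.

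It remains to splice these pair-blocks, together with the $L_0$-block $0\to B_t$, into one path, and for this I need a clique of connector vertices through which to hop. The natural choice is the set of block endpoints $\{B_t+xE_{k,l}:x\in F\}$, which is a clique precisely when $\det(2B_t+sE_{k,l})$ vanishes identically in $s$. Since $B_t$ is $c$ times the $0$--$1$ matrix supported on $\II_{k,l}$, that matrix has all rows below row $k$ equal to zero when $k<n$, and two or more equal all-ones rows when $k=n$ and $n\geq3$; in either case both $\det(2B_t)$ and the $(k,l)$-cofactor vanish, so the endpoints form a clique. One then starts at $0$, runs the given path to $B_t$, hops through this clique from one pair-block to the next, and orients the block of the pair containing $c$ so as to terminate at $B_t+cE_{k,l}$. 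This settles every case with $k<n$ as well as every case with $n\geq3$; for the remaining situations one can instead hop through the clique $\{xE_{k,l}:x\in F\}$, which is \emph{always} a clique because $xE_{k,l}+x'E_{k,l}=(x+x')E_{k,l}$ has rank at most $1$.

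The hard part is the bookkeeping of this splicing in the degenerate low-dimensional cases. When $n=2$ and $k=n$ the matrix $B_t$ may be invertible and the endpoint sums $2B_t+sE_{k,l}$ fail to vanish for almost every $s$ (for $k=l=2$ one has $\det(2B_t+sE_{2,2})=2c(s-2c)$, zero only at $s=2c$), so $\{B_t+xE_{k,l}\}$ is no longer a clique; routing through $\{xE_{k,l}\}$ instead forces the two staircases of a pair-block to be merged at an interior index $i$ with $2B_i$ singular rather than at $u_0-w_0$, and the final block must run from a connector vertex to the prescribed terminus $B_t+cE_{k,l}$ without revisiting vertices. Guaranteeing such interior joins exist and placing them consistently is where the argument needs genuine care, and I expect to dispose of the finitely many offending configurations (essentially $n=2$) by a direct construction, much as the base step of the outer induction along $\AAA_{1,1}\subseteq\cdots\subseteq M_n(F)$ will require anyway.
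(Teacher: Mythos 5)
Your layer decomposition, the cancellation identity $(B_i+xE_{k,l})+(B_j-xE_{k,l})=B_i+B_j$, and the two staircases are all correct, and your clique argument genuinely settles every case except the one you defer: for $k<n$ the induced subgraph on $\overline{\AAA_{k,l}}$ is complete, and for $k=n\geq 3$ the matrix $2B_t+sE_{n,l}$ has two equal rows, so $\{B_t+xE_{n,l};\; x\in F\}$ is a clique and your splicing goes through. The problem is that the deferred case $k=n=2$ is not ``finitely many offending configurations'': it consists of the induction steps $(k,l)=(2,1)$ and $(2,2)$ for $M_2(F)$ over \emph{every} finite field $F$ of odd characteristic, which is exactly what Lemma \ref{thm:Ham1} needs to make $\tau(M_2(F))$ Hamiltonian; without it the main theorem fails. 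Worse, the fallback you sketch cannot be completed inside your own framework. Your blocks use only edges guaranteed by the hypothesis, namely $u_i-w_j$ with $|i-j|=1$ and diagonals $u_i-w_i$ with $B_i\in Z(M_n(F))$; under the staircase edges alone the layer-pair splits into your two staircases, joined only by diagonals, so if $B_t\notin Z(M_n(F))$ then $u_t$ and $w_t$ have degree one and are forced to be the two ends of \emph{any} Hamiltonian path of the block. Hence a block with ends $\pm xE_{k,l}$, which routing through the clique $\{xE_{k,l}\}$ requires, exists only if $B_t$ is a zero-divisor. For $(k,l)=(2,2)$ this fails outright: $B_t=c(E_{1,1}+E_{1,2}+E_{2,1})$ is invertible. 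For $(k,l)=(2,1)$ the matrix $B_t=c(E_{1,1}+E_{1,2})$ is singular, so the blocks exist, but the splice then breaks at the other joint: $\det(B_t+xE_{2,1})=-cx\neq 0$ for all $x\in F^*$, so the endpoint $B_t$ of the $L_0$-block is adjacent to no connector $xE_{2,1}$, and $\det(2B_t+(x+y)E_{2,1})=-2c(x+y)$ shows that ends of \emph{distinct} pair-blocks are never adjacent either. In both subcases your construction stalls, and the ``direct construction'' you promise is precisely the missing content.

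The paper avoids this dichotomy entirely by a twist your proposal lacks: instead of pairing layer $x$ with layer $-x$ and cancelling the shifts inside each pair, it snakes through the layers in the order $x_1,-x_1,x_2,-x_2,\ldots$ and flips the sign of the \emph{entire sequence} $(B_i)$ after every second layer, while alternating the sign of the shift $\pm x_jE_{k,l}$ along each row. Every edge of the resulting path then has sum equal to $0$, $\pm B_1$, $\pm(B_i+B_{i+1})$, or a scalar multiple of $E_{k,l}$; in particular, at the junction between consecutive layer-pairs the $B_t$ terms cancel, leaving $(x_{j+1}-x_j)E_{k,l}$. All of these are zero-divisors for every $k$, $l$, $n$ simultaneously, so no determinant computation, no clique, and no case distinction on $(n,k)$ is needed, and $k=n=2$ requires no special treatment. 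To salvage your approach you would need a substitute for this sign flip in the $2\times 2$ case; it is a missing idea, not bookkeeping.
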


\begin{proof}
Suppose there  exists a Hamiltonian path $0,B_1,\ldots, B_{t}$ on $\AAA_{k,l}$ with the first vertex 0 and the last vertex in $B_t \in \CCC_{k,l}(c)$.
Since $|F|$ is odd, let us order the elements $F=\{0,x_1,-x_1,\ldots,x_m,-x_m\}$, where $x_m=(-1)^{m} c$. Note that $t$ is odd, so we can extend the Hamiltonian path   
by the path 

{\small
\begin{tabular}{ccccccccc}
   $x_1 E_{k,l}$ &\, \textemdash &$(-B_1- x_1 E_{k,l})$   &\, \textemdash \, &$(-B_2+x_1 E_{k,l})$   &\, \textemdash \, &\ldots  &\, \textemdash \,  &$(-B_t+x_1 E_{k,l})$ \\
   $\vert$  &  & &&&  && & \\
   $- x_1 E_{k,l}$ &\, \textemdash  &$(-B_1+x_1 E_{k,l})$ &\, \textemdash \, &$(-B_2- x_1 E_{k,l})$   &\, \textemdash \, &\ldots  &\, \textemdash \,  &$(-B_t-x_1 E_{k,l})$ \\
    & &  &   &  &&&&$\vert$ \\
   $x_2 E_{k,l}$ &\, \textemdash  &$(B_1- x_2 E_{k,l})$    &\, \textemdash \, &$(B_2+x_2 E_{k,l})$   &\, \textemdash \, &\ldots  &\, \textemdash \,  &$(B_t+x_2 E_{k,l})$ \\
   $\vert$ & & &  &&&& & \\
   $- x_2 E_{k,l}$ &\, \textemdash  &$(B_1+x_2 E_{k,l})$  &\, \textemdash \, &$(B_2- x_2 E_{k,l})$   &\, \textemdash \, &\ldots  &\, \textemdash \,  &$(B_t-x_2 E_{k,l})$ \\
    & &  & &  & & &&$\vert$ \\
   \vdots  &&\vdots && \vdots  & &\vdots\\
   $\vert$&   & && & && & \\ 
  $ -x_m E_{k,l} $ &\, \textemdash  & $((-1)^m B_1+x_m E_{k,l})$  &\, \textemdash \, &$((-1)^mB_2- x_m E_{k,l})$   &\, \textemdash \, &\ldots  &\, \textemdash \, &$((-1)^m B_t- x_m E_{k,l})$
 \end{tabular}
}
 starting in $-B_t+x_1 E_{k,l}$ and ending in $(-1)^mB_t-x_m E_{k,l}$.

Since $(-1)^m B_t- x_m E_{k,l}=(-1)^m(B_t+c E_{k,l}) \in \overline{\CCC_{k,l}}(d)$, where $d=(-1)^m c$, the Lemma follows. 
\end{proof}

\bigskip

We can now prove that the total ring of a full matrix ring is Hamiltonian.

\bigskip

\begin{Lemma}\label{thm:Ham1}
Suppose $n \geq 2$ and $F$ is a field. Then the graph $\tau(M_n(F))$ is Hamiltonian.
\end{Lemma}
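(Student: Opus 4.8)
The plan is to produce a Hamiltonian \emph{path} of $\tau(M_n(F))$ from $0$ to a matrix of rank at most one, and then to close it into a Hamiltonian cycle. Here $F$ is finite (otherwise $M_n(F)$ is not a finite ring), so by Lemma~\ref{lemma:finite} a matrix is a zero-divisor exactly when it is singular; in particular $0$ and every matrix of rank at most one are zero-divisors. Two consequences are used repeatedly: any two matrices whose sum has rank at most one are adjacent, and any matrix of rank at most one is adjacent to $0$, since it equals its own sum with $0$. The latter is what will let me close the path into a cycle at the very end.

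The engine of the proof is an induction along the chain $\AAA_{1,2} \subseteq \AAA_{1,3} \subseteq \cdots \subseteq \AAA_{n,n} \subseteq \overline{\AAA_{n,n}} = M_n(F)$. The key structural observation is that enlarging $\II_{k,l}$ to $\overline{\II_{k,l}}$ adjoins exactly the position $(k,l)$, and that $\overline{\II_{k,l}} = \II_{k',l'}$ where $(k',l')$ is the lexicographic successor of $(k,l)$; hence $\overline{\AAA_{k,l}} = \AAA_{k',l'}$ and each link of the chain adds exactly one free matrix entry. Lemmas~\ref{thm:Ham0} and~\ref{thm:Ham} are precisely the inductive steps: each promotes a Hamiltonian path of $\AAA_{k,l}$ with prescribed endpoints to a Hamiltonian path of $\overline{\AAA_{k,l}}=\AAA_{k',l'}$ with prescribed endpoints, and I would apply Lemma~\ref{thm:Ham0} throughout when $\ch(F)=2$ and Lemma~\ref{thm:Ham} throughout when $\ch(F)\neq 2$. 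For the base case I would use $\AAA_{1,2}=\{cE_{1,1} : c\in F\}$: since $n\geq 2$, any two of its elements sum to a matrix of rank at most one, so $\AAA_{1,2}$ induces a complete subgraph, which admits a Hamiltonian path from $0$ to any prescribed vertex.

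The crux is the bookkeeping that makes the steps chain without interruption, and this is the part I expect to be the main obstacle; the graph-theoretic work is already packaged inside the two lemmas, so what remains is to check that the endpoint \emph{produced} at each step is exactly the endpoint \emph{required} by the next. When $\ch(F)=2$, the step at $(k,l)$ ends in $\CCC_{k,l}(0)$, and inside $\overline{\AAA_{k,l}}=\AAA_{k',l'}$ this set is just the matrices supported on the single entry $(k,l)$; I would verify that it coincides with $\underline{\CCC_{k',l'}}(0)\cap\AAA_{k',l'}$, the hypothesis of the step at $(k',l')$. The two clauses in the definition of $\underline{\II_{k',l'}}$ are exactly what make this identity hold both for the ordinary successor (when $l'\neq 1$) and for the row-wrap successor from $(k,n)$ to $(k+1,1)$ (when $l'=1$). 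When $\ch(F)\neq 2$, the analogous check is that $\overline{\CCC_{k,l}}(d)$, read inside $\AAA_{k',l'}$, is the single matrix of $\CCC_{k',l'}(d)\cap\AAA_{k',l'}$, matching the hypothesis of the next step; here the base endpoint required is a vertex of $\CCC_{1,2}(c)\cap\AAA_{1,2}$ with $c\in F^*$, namely $cE_{1,1}$, which the complete base graph supplies.

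Iterating to the top of the chain yields a Hamiltonian path of $\overline{\AAA_{n,n}}=M_n(F)$ starting at $0$ and ending at a vertex of $\CCC_{n,n}(0)$ when $\ch(F)=2$, i.e.\ a scalar multiple of $E_{n,n}$, or of $\overline{\CCC_{n,n}}(d)$ when $\ch(F)\neq 2$, i.e.\ the matrix all of whose entries equal some $d\in F^*$. Since $n\geq 2$, in either case the terminal vertex has rank at most one, hence is singular and adjacent to $0$; adjoining this one edge closes the Hamiltonian path into a Hamiltonian cycle, and therefore $\tau(M_n(F))$ is Hamiltonian.
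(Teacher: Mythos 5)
Your proposal is correct and follows essentially the same route as the paper: both proofs inductively chain Lemmas~\ref{thm:Ham0} and~\ref{thm:Ham} along the filtration $\AAA_{1,1}\subseteq\AAA_{1,2}\subseteq\cdots\subseteq\overline{\AAA_{n,n}}=M_n(F)$, using $\overline{\AAA_{k,l}}=\AAA_{k',l'}$ for the lexicographic successor, and then close the resulting Hamiltonian path into a cycle because the terminal vertex (a multiple of $E_{n,n}$, or the all-$d$ matrix) is singular and hence adjacent to $0$. Your explicit verification of the endpoint bookkeeping --- that $\CCC_{k,l}(0)\cap\AAA_{k',l'}$ lies in $\underline{\CCC_{k',l'}}(0)$, respectively that $\overline{\CCC_{k,l}}(d)\cap\AAA_{k',l'}\subseteq\CCC_{k',l'}(d)$ --- is exactly the content the paper compresses into the phrase ``inductively applying,'' so there is no substantive difference.
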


\begin{proof}
Note that for any $d \in F$ the sets $\overline{\CCC_{n,n}}(d)$ and $\CCC_{n,n}(0)$ consist of matrices that are zero-divisors, and $\overline{\AAA_{k,l}}=\AAA_{k,l+1}$ if $l <n$ and
$\overline{\AAA_{k,n}}=\AAA_{k+1,1}$ if $k <n$.
Therefore the Hamiltonian path on $\overline{\AAA_{n,n}}$ constructed by inductively applying Lemma \ref{thm:Ham0} in the
case of $\ch(F)=2$, or by inductively applying Lemma \ref{thm:Ham} in the
case of $\ch(F) \neq 2$, actually gives rise to a Hamiltonian cycle. 
\end{proof}

\bigskip

To prove that the total graph of a non-local finite ring is Hamiltonian, we also need to examine the total graphs of direct products of rings and the total graphs of factor rings.

\bigskip

\begin{Lemma}\label{thm:Ham2}
 If $R$ and $S$ are finite rings, then $\tau(R\times S)$ is Hamiltonian.
\end{Lemma}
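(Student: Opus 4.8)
The plan is to reduce $\tau(R\times S)$ to a purely combinatorial adjacency rule and then assemble a Hamiltonian cycle by hand from large, highly structured pieces. First I would apply Lemma~\ref{lemma:finite} to $R$, $S$ and $R\times S$ to see that $(x,y)$ is invertible in $R\times S$ iff $x\in R^*$ and $y\in S^*$, so that $(x,y)\in Z(R\times S)$ iff $x\in Z(R)$ or $y\in Z(S)$; hence two distinct vertices $(a,b),(c,d)$ are adjacent iff $a+c\in Z(R)$ or $b+d\in Z(S)$. Writing $R\times\{b\}$ for a \emph{row} and $\{a\}\times S$ for a \emph{column}, and using $0\in Z(R)$, $0\in Z(S)$ (valid since a ring has $1\ne 0$, so $|R|,|S|\ge 2$), I extract three facts: (i) between any two rows the anti-diagonal edges $(a,b)\sim(-a,b')$ are always present; (ii) if $b+b'\in Z(S)$ the two rows are completely joined; (iii) if $2b\in Z(S)$ the row $R\times\{b\}$ is a clique. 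The column versions hold by symmetry. I also record that in a finite ring a product with a non-invertible factor is non-invertible, so $Z(R)$ absorbs multiplication; in particular, if $|R|$ is even then $2$ is a zero-divisor and $2a\in Z(R)$ for all $a$.

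Then I split on parity. If $|R|$ is even, the column form of (iii) makes every column a clique $K_{|S|}$. I would fix any cyclic listing $a_0,\dots,a_{|R|-1}$ of $R$ together with $v_0,\dots,v_{|R|-1}\in S$ satisfying $v_i\ne -v_{i-1}$ (indices mod $|R|$), run through column $a_i$ by a clique-Hamiltonian path from $(a_i,-v_{i-1})$ to $(a_i,v_i)$, and cross to the next column by the edge $(a_i,v_i)\sim(a_{i+1},-v_i)$, which is present because $v_i+(-v_i)=0\in Z(S)$. As $|R|$ is even, such a cyclic sequence $(v_i)$ exists even when $S=\FF_2$, and the walk closes to a Hamiltonian cycle. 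The case $|S|$ even is the same with rows.

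It remains to treat $|R|,|S|$ both odd, so $|R|,|S|\ge 3$. Now I pair the rows by $b\leftrightarrow -b$: the self-paired row $R\times\{0\}$ is a clique $K_{|R|}$ by (iii), and each pair $\{b,-b\}$ spans a block containing $K_{|R|,|R|}$ by (ii). I would stitch these $\tfrac{|S|+1}{2}$ blocks cyclically, traversing a clique-row by a Hamiltonian path between any two chosen distinct vertices, traversing a pair-block by a Hamiltonian path of $K_{|R|,|R|}$ from a prescribed vertex on one side to a prescribed vertex on the other, and moving between consecutive blocks along anti-diagonal edges from (i). Since (i) joins arbitrary vertices of arbitrary rows and $|R|\ge 3$ leaves full freedom in the endpoints, the blocks join up into one cycle that closes.

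The substance of the argument is the bookkeeping of this stitching rather than any single hard step. One must verify the two prescribed-endpoint facts---that a clique $K_m$ has a Hamiltonian path between any two distinct vertices and that $K_{m,m}$ has one between any two opposite-side vertices (both immediate for $m\ge 2$ by ordering the remaining vertices arbitrarily, respectively alternately)---check that the chosen anti-diagonal crossings visit no vertex twice, and confirm the existence of the cyclic sequence $(v_i)$ in the even case, which is exactly where the parity of $|R|$ (or $|S|$) is needed, since for $S=\FF_2$ an odd number of columns cannot be closed up.
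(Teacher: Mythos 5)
Your proof is correct, and it takes a genuinely different route from the paper's. Both arguments begin with the same characterization of $Z(R\times S)$ and both rely on the anti-diagonal crossing $(a,b)\sim(-a,b')$, but the case split differs: the paper splits on characteristic, while you split on the parity of $|R|$ and $|S|$. When $\ch(S)=2$ the paper snakes through the rows $R\times\{b\}$, each a clique because $2b=0$, closing up since $|S|$ is then even; when both characteristics are $\neq 2$ it reorders $S$ as $\{b_1,-b_1,\ldots,b_t,-b_t,b_{t+1},\ldots,b_s\}$ with $2b_i=0$ for $i>t$ and writes down an explicit zigzag through each pair of rows plus a snake through the leftover self-inverse rows. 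Your key observation---that $|R|$ even already forces $2a\in Z(R)$ for every $a$, so that every column $\{a\}\times S$ is a clique---uses a strictly weaker hypothesis than the paper's $2b=0$: you only need $2$ to be a zero-divisor, not zero. Consequently, rings of even order but characteristic $\neq 2$ (a factor like $\ZZ_4$ or $\ZZ_6$), which the paper must process inside its more intricate third case with the bookkeeping of the elements $b_{t+1},\ldots,b_s$, fall into your easy clique-column case; and your remaining both-odd case is structurally cleaner than the paper's third case, because odd order forces $2b=0\Rightarrow b=0$, leaving exactly one clique row plus $\tfrac{1}{2}(|S|-1)$ complete bipartite blocks. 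The trade-off is concreteness: the paper's proof consists of explicit vertex listings, whereas yours delegates to small prescribed-endpoint facts (Hamiltonian paths in $K_m$ between any two vertices, in $K_{m,m}$ between opposite sides, and existence of the cyclic sequence $(v_i)$); these all check out, including the parity-sensitive case $S\cong\ZZ_2$, where your sequence $(v_i)$ amounts to a proper $2$-colouring of an even cycle, which is exactly where the evenness of $|R|$ is used.
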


\begin{proof}
 Let $R=\{a_1,a_2,\ldots,a_r\}$ and $S=\{b_1,b_2,\ldots,b_s\}$.  Note that $(x,y)$ is a zero-divisor in $R \times S$ if and only if 
 $x \in Z(R)$ or $y \in Z(R)$. 
 
 Suppose first that $\ch (S)=2$. Then $s$ is even and thus $(a_1,b_{1}) - (a_2,b_1) - \ldots - (a_r,b_{1}) - (-a_r,b_2) - \ldots - (-a_1,b_{2}) - \ldots - (-a_1,b_s) - (a_1,b_1)$
 is a Hamiltonian cycle in   $\tau(R\times S)$. If $\ch(R)=2$, we can reverse the roles of $R$ and $S$ to obtain the same result.
 
 If $\ch(R) \ne 2$ and $\ch(S) \ne 2$, we can reorder the elements of $R$ so that $a_r=-a_1$ and the elements of $S$ so that
  $S=\{b_1,-b_1,b_2,-b_2,\ldots,b_t,-b_t, b_{t+1},\ldots,b_s\}$, where $2b_{i}=0$ for $i=t+1,\ldots,s$.
  Note that for $i=1,2,\ldots, \left\lceil \frac{t}{2}\right\rceil$ the graph $\tau(R\times S)$ contains paths 
  $(a_1,b_{2i-1}) - (a_1,-b_{2i-1}) - (a_2,b_{2i-1}) - (a_2,-b_{2i-1}) - \ldots - (a_r,b_{2i-1}) - (a_r,-b_{2i-1}) $, for  $i=1,2,\ldots, \left\lfloor \frac{t}{2}\right\rfloor$ it contains paths
  $(-a_r,b_{2i}) - (-a_r,-b_{2i})  - (-a_{r-1},b_{2i}) - (-a_{r-1},-b_{2i})  - \ldots - (-a_1,b_{2i}) - (-a_1,-b_{2i})$   and it also
contains the path 
   $(a_1,b_{t+1}) - (a_2,b_{t+1}) - \ldots - (a_r,b_{t+1}) - (-a_r,b_{t+2}) - \ldots - (-a_1,b_{t+2}) - \ldots - (a,b_s)$, where $a$ is $-a_1=a_r$.
   We can join all these paths into a Hamiltonian cycle $(a_1,b_1)$ \textemdash$(a_1,-b_1)$  \textemdash $(a_2,b_1)$ \textemdash \ldots  \textemdash $(a_r,-b_{1})$ \textemdash$(-a_r,b_{2})$ \textemdash
   \ldots  \textemdash $(a,b_{s})$  \textemdash $(a_1,b_1)$.
 \end{proof}

\bigskip

Since $R/J$ is a finite semisimple ring, it follows by the Wedderburn's theorem that it is a direct product of matrix rings. Therefore,
it seems only natural to study if the Hamiltonian cycle can be lifted modulo the Jacobson radical.

\bigskip

\begin{Lemma}\label{thm:Ham3}
 If $\tau(R /J)$ is Hamiltonian, then $\tau(R)$ is Hamiltonian.
\end{Lemma}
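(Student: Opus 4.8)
The plan is to \emph{blow up} a Hamiltonian cycle of $\tau(R/J)$ into one of $\tau(R)$, using Lemma \ref{lemma:R/J} to transfer adjacency from the quotient to $R$. Write $m=|J|$ and $N=|R/J|$, and let $\bar c_1-\bar c_2-\cdots-\bar c_N-\bar c_1$ be a Hamiltonian cycle in $\tau(R/J)$. The vertex set of $\tau(R)$ is partitioned into the cosets $\bar c_i=c_i+J$, each of size $m$; I would enumerate the elements of the $i$-th coset as $c_i^{(1)},\ldots,c_i^{(m)}$, so that the $Nm=|R|$ vertices of $\tau(R)$ are exactly the $c_i^{(j)}$.

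The key observation is that adjacency of two elements lying in \emph{different} cosets is governed entirely by the quotient. Indeed, suppose $\bar c_i$ and $\bar c_{i'}$ are adjacent in $\tau(R/J)$, i.e.\ $\bar c_i+\bar c_{i'}\in Z(R/J)$. Then for every $x\in\bar c_i$ and every $y\in\bar c_{i'}$ Lemma \ref{lemma:R/J} gives $x+y\in Z(R)$; since $x$ and $y$ lie in distinct cosets they are automatically distinct, so $x-y$ is an edge of $\tau(R)$. Hence each edge $\bar c_i-\bar c_{i'}$ of the Hamiltonian cycle of $\tau(R/J)$ lifts to a complete bipartite graph $K_{m,m}$ between the two corresponding cosets in $\tau(R)$. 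Note that this lifting uses \emph{no} within-coset edges, so I never need to analyse whether $2\bar c_i$ is a zero-divisor.

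With this in hand I would exhibit the Hamiltonian cycle of $\tau(R)$ explicitly, by traversing the cyclic sequence of cosets $m$ times, once for each representative index:
\[
c_1^{(1)}-c_2^{(1)}-\cdots-c_N^{(1)}-c_1^{(2)}-c_2^{(2)}-\cdots-c_N^{(2)}-c_1^{(3)}-\cdots-c_N^{(m)}-c_1^{(1)}.
\]
Every step inside a single round moves from coset $\bar c_i$ to the adjacent coset $\bar c_{i+1}$, while every step joining one round to the next, together with the final closing step, moves from coset $\bar c_N$ to coset $\bar c_1$; all of these are edges of $\tau(R)$ by the lifting of the previous paragraph. The listed vertices are precisely the $Nm$ elements of $R$, each appearing exactly once, so the construction is a Hamiltonian cycle.

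I expect no serious obstacle here: essentially all of the content is the adjacency transfer in the second paragraph, and the blow-up combinatorics is then routine. The only points to verify are that the object produced is a genuine \emph{simple} cycle, which follows since the $m$ uses of the wrap-around edge $\bar c_N-\bar c_1$ involve the pairwise distinct element-pairs $(c_N^{(j)},c_1^{(j+1)})$ (round indices read cyclically), and that $N\geq 3$ because $\tau(R/J)$ is assumed Hamiltonian, so $\bar c_N$ and $\bar c_1$ are genuinely different cosets and every edge used really joins distinct vertices.
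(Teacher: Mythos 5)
Your proof is correct and takes essentially the same approach as the paper: both blow up the Hamiltonian cycle of $\tau(R/J)$ by traversing the cyclic sequence of cosets $|J|$ times, using Lemma \ref{lemma:R/J} to transfer adjacency between distinct cosets, and closing up via the wrap-around edge. The only cosmetic difference is that the paper enumerates the cosets coherently (the round indexed by $j_l \in J$ uses the representatives $x_i + j_l$) and separately notes that $Z(R)+J \subseteq Z(R)$, whereas you allow arbitrary enumerations of each coset and get the adjacency transfer directly from Lemma \ref{lemma:R/J}; neither difference is substantive.
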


\medskip
\begin{proof}
First, note that if $x \in Z(R)$ and $j\in J$, then $x+j \in Z(R)$, since otherwise $x+j=u\in R^*$ would imply that $x=u-j=u(1-u^{-1}j) \in R^* (1+J)\subseteq R^*$.

Let $x_1+J$ \, \textemdash\, $x_2+J$\, \textemdash \, \ldots \,  \textemdash\, $x_m+J$ be a Hamiltonian cycle in $R/J$. Note that $x_i+x_{i+1}+J \in Z(R/J)$ implies that $x_i+x_{i+1} \in Z(R)$ by Lemma \ref{lemma:R/J}
and thus $x_i+x_{i+1}+j+j' \in Z(R)$ for any $j,j' \in J$ by the above remark. If $J=\{j_1,j_2,\ldots,j_k\}$, it follows that 
$x_1+j_1$ \, \textemdash \, \ldots \, \textemdash \, \, $x_m+j_1$\, \textemdash \, $x_1+j_2$ \, \textemdash\, \ldots  \, \textemdash \, $x_m+j_2$\, \textemdash \,  \ldots  \,\textemdash \, $x_1+j_k$\, \textemdash \, \ldots \,  \textemdash\, $x_m+j_k$
is a Hamiltonian cycle in $\tau(R)$.
\end{proof}

\bigskip

We are now in the position to prove the main theorem of 
this section.

\bigskip

\begin{Theorem}
Let $R$ be a finite ring. Then, the graph $\tau(R)$ is Hamiltonian if and only if $R$ is not local.
\end{Theorem}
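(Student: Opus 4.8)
The plan is to prove both directions using the results established earlier in this section. The forward direction is immediate: if $\tau(R)$ is Hamiltonian, then in particular $\tau(R)$ is connected, and by Proposition \ref{thm:local} a local ring has disconnected total graph, so $R$ cannot be local.

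For the reverse direction, suppose $R$ is not local. The strategy is to reduce to the semisimple quotient $R/J$ via Lemma \ref{thm:Ham3}, which tells us it suffices to show that $\tau(R/J)$ is Hamiltonian. The key observation is that since $R$ is not local, $R/J$ is not local either: a local ring has $R/J$ equal to a division ring (hence a single matrix block of size one over a division ring), whereas non-locality forces $R/J$ to decompose nontrivially. By Wedderburn's theorem, $R/J \cong M_{n_1}(F_1) \times \cdots \times M_{n_r}(F_r)$ is a finite direct product of full matrix rings over finite fields (which are themselves fields, by Wedderburn's little theorem on finite division rings). The point is that non-locality of $R/J$ means that this decomposition is \emph{not} of the form of a single $M_1(F)=F$, i.e.\ either $r \geq 2$ or some $n_i \geq 2$.

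I would then split into the two cases. If $r \geq 2$, write $R/J = R_1 \times R_2$ where $R_1$ is the first factor and $R_2$ is the product of the rest; both are finite rings, so $\tau(R/J)$ is Hamiltonian directly by Lemma \ref{thm:Ham2}. If $r = 1$ but $n_1 \geq 2$, then $R/J = M_{n_1}(F_1)$ with $n_1 \geq 2$, and $\tau(R/J)$ is Hamiltonian by Lemma \ref{thm:Ham1}. In either case $\tau(R/J)$ is Hamiltonian, and applying Lemma \ref{thm:Ham3} lifts this to the conclusion that $\tau(R)$ is Hamiltonian.

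The main obstacle is not in the assembly of the lemmas—that is routine—but in correctly identifying that non-locality of $R$ transfers to a nontrivial Wedderburn decomposition of $R/J$. One must argue that $R$ is local if and only if $R/J$ is a division ring: the radical $J$ is the unique maximal ideal precisely when $R/J$ has no nontrivial ideals and no idempotents other than $0,1$, which for a finite semisimple ring means exactly one matrix block of size one. Verifying this dichotomy cleanly is the crux; once it is in place, the case analysis on $(r, n_1)$ and the invocation of Lemmas \ref{thm:Ham1} and \ref{thm:Ham2}, followed by Lemma \ref{thm:Ham3}, completes the argument.
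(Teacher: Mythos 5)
Your proposal is correct and follows essentially the same route as the paper: disconnectedness via Proposition \ref{thm:local} for the local case, and for the non-local case the Wedderburn decomposition of $R/J$ with the case split $k \geq 2$ (Lemma \ref{thm:Ham2}) versus $n_1 \geq 2$ (Lemma \ref{thm:Ham1}), lifted through the radical by Lemma \ref{thm:Ham3}. The only difference is that you spell out the standard fact that $R$ is local if and only if $R/J$ is a division ring, which the paper leaves implicit.
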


\begin{proof}
 If $R$ is local, then the graph $\tau(R)$ is not connected  by Proposition \ref{thm:local} and thus not Hamiltonian. 
 Otherwise, let  $R/J=M_{n_1}(F_1) \times \ldots \times M_{n_k}(F_k)$, with $k\geq 2$ or $n_1 \geq 2$. 
Then the theorem follows by Lemmas \ref{thm:Ham1}, \ref{thm:Ham2} and \ref{thm:Ham3}. 
\end{proof}

%-----------------------------------------------------
%-----------------------------------------------------

\bigskip
\bigskip
\section{The domination number}
\bigskip

The set of vertices $\Dd \subseteq V(G)$ in a graph $G$ is called a \emph{dominating set} if every vertex in $V(G) \backslash \Dd$ has
a neighbour in $\Dd$. The \emph{domination number $\gamma(G)$} is the minimum size of a dominating set in $G$. In this section, we find an upper bound for the domination number of an arbitrary finite ring.  Compare this result with the domination  number of a commutative finite ring in \cite[Theorem 4.1]{Shekarriz}.

\bigskip
As a consequence of Proposition \ref{thm:local} we obtain the following result, see also \cite[Theorem 4.1]{Shekarriz}.

\bigskip

\begin{Proposition}\label{thm:taulocal}
 If $R$ is a local ring, then $$\gamma(\tau(R))=\begin{cases}
 |R/J|, & \text{if } \ch(R)=2^k,\\
  \frac{1}{2}(|R/J|+1), & \text{otherwise}.
 \end{cases}$$
 \qed
\end{Proposition}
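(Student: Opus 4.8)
The plan is to read the structure of $\tau(R)$ straight off Proposition \ref{thm:local} and then exploit that the domination number is additive over connected components. For a local ring $J=Z(R)$, so in the notation of Proposition \ref{thm:local} we have $|Z(R)|=|J|$ and $|R/Z(R)|=|R/J|$, and that proposition exhibits $\tau(R)$ as an explicit disjoint union of complete graphs (when $\ch(R)=2^k$) or of one complete graph together with several complete bipartite graphs (otherwise). The key general fact is that a dominating set of a disjoint union $G_1\sqcup\cdots\sqcup G_s$ is precisely a union of dominating sets of the individual $G_i$, since no vertex of one component is adjacent to any vertex of another; hence $\gamma(G_1\sqcup\cdots\sqcup G_s)=\sum_{i=1}^{s}\gamma(G_i)$, and it suffices to determine the domination number of each component type and how often it occurs.

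The component domination numbers reduce to two elementary computations. Any single vertex of $K_m$ is adjacent to all the others, so $\gamma(K_m)=1$ for every $m\ge 1$; this immediately settles the case $\ch(R)=2^k$, where $\tau(R)$ is a disjoint union of $|R/J|$ copies of $K_{|J|}$ and therefore $\gamma(\tau(R))=|R/J|\cdot 1=|R/J|$. For the remaining case one must handle the complete bipartite blocks $K_{|J|,|J|}$: I would compute their domination number directly from the definition and then combine it, via the additivity above, with the contribution $1$ coming from the single $K_{|J|}$ block.

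Assembling the second case, Proposition \ref{thm:local} supplies one copy of $K_{|J|}$ and $\tfrac12(|R/J|-1)$ copies of $K_{|J|,|J|}$, so $\gamma(\tau(R))$ is $1$ plus $\tfrac12(|R/J|-1)$ times the per-block bipartite domination number; the target value $\tfrac12(|R/J|+1)$ is obtained once this per-block number is pinned down. The step I expect to require the most care is exactly this bipartite computation together with the accompanying bookkeeping of block counts: everything else is either the purely graph-theoretic additivity of $\gamma$ over a disjoint union or the triviality $\gamma(K_m)=1$, and the decomposition itself is handed to us by Proposition \ref{thm:local}.
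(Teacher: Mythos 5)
Your plan --- take the component decomposition from Proposition \ref{thm:local}, use additivity of the domination number over connected components, and compute $\gamma$ block by block --- is exactly the argument the paper leaves implicit (the proposition is stated with only a reference to Proposition \ref{thm:local} and no further proof). Your treatment of the case $\ch(R)=2^k$ is complete and correct: there are $|R/J|$ components, each a complete graph with $\gamma(K_{|J|})=1$, giving $\gamma(\tau(R))=|R/J|$.

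The gap is precisely the step you deferred, and it cannot be closed in the way you anticipate. For $q\ge 2$ one has $\gamma(K_{q,q})=2$, not $1$: a single vertex dominates itself and all of the opposite part, but none of the remaining $q-1\ge 1$ vertices of its own part. Only for $q=1$ is $\gamma(K_{1,1})=1$. Consequently, when $\ch(R)\neq 2^k$ your assembly yields
\[
\gamma(\tau(R)) \;=\; 1 + 2\cdot\tfrac{1}{2}\bigl(|R/J|-1\bigr) \;=\; |R/J| \qquad \text{whenever } |J|\ge 2,
\]
and the claimed value $\tfrac{1}{2}(|R/J|+1)$ arises only when $|J|=1$, i.e.\ when $R$ is a field. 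In other words, the proposition as printed is false for local rings of odd characteristic that are not fields: for $R=\ZZ_9$, Proposition \ref{thm:local} gives $\tau(\ZZ_9)\cong K_3\sqcup K_{3,3}$, so $\gamma(\tau(\ZZ_9))=1+2=3=|R/J|$, while the stated formula predicts $2$. Your method, carried out honestly, therefore proves a corrected statement --- $\gamma(\tau(R))=|R/J|$ whenever $\ch(R)=2^k$ or $|J|\ge 2$, and $\gamma(\tau(R))=\tfrac{1}{2}(|R|+1)$ when $R$ is a field of odd order --- rather than the one in the paper. The assertion that ``the target value is obtained once this per-block number is pinned down'' is the precise point of failure: pinning it down refutes the target value instead of confirming it.
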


\bigskip

We now proceed to investigate the total graphs of arbitrary finite rings. We will again see that the full matrix ring over a field is of special importance,
so we shall first investigate the existence of a dominating set in the matrix setting.

\bigskip

\begin{Lemma}\label{thm:MnF}
 For every finite field $F$ and integer $n \geq 2$, the set $$\Dd=\{x E_{1j}; \; x \in F^*, \, 1 \leq j \leq n\} \cup \{0\}$$ 
 is a dominating set for $\tau(M_n(F))$.
\end{Lemma}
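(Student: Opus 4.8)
The plan is to translate the adjacency condition into a determinantal one and then reduce the problem to a single cofactor computation. By Lemma~\ref{lemma:finite} a matrix in $M_n(F)$ is a zero-divisor precisely when it is not invertible, that is, precisely when its determinant vanishes. Hence two distinct matrices $A,B \in M_n(F)$ are adjacent in $\tau(M_n(F))$ if and only if $\det(A+B)=0$. To prove that $\Dd$ is dominating I would take an arbitrary vertex $A \in M_n(F)\setminus\Dd$ and exhibit an element of $\Dd$ adjacent to it.

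First I would dispose of the singular matrices. If $\det A = 0$, then $A+0=A$ has zero determinant, and since $A\neq 0$ (because $0\in\Dd$ while $A\notin\Dd$) the vertex $A$ is adjacent to $0\in\Dd$. Thus every non-invertible matrix is dominated by the zero matrix.

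The substantive case is when $A$ is invertible, and here the idea is to perturb $A$ in a single entry of its first row. Adding $xE_{1j}$ to $A$ changes only the $(1,j)$ entry, so expanding the determinant along the first row yields the affine relation
$$\det(A+xE_{1j}) = \det A + x\, c_j,$$
where $c_j=(-1)^{1+j}\det(A_{\hat1\hat j})$ is the $(1,j)$-cofactor of $A$ and $A_{\hat1\hat j}$ denotes $A$ with its first row and $j$-th column deleted. The Laplace expansion $\det A = \sum_{j=1}^{n} a_{1j}\, c_j$ together with $\det A \neq 0$ forces at least one cofactor $c_j$ to be nonzero. For such a $j$ I would set $x=-\det A / c_j$; since $\det A\neq0$ and $c_j\neq0$ this $x$ lies in $F^*$, and by construction $\det(A+xE_{1j})=0$. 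Because $A$ is invertible while $xE_{1j}$ has rank one (hence is singular for $n\geq2$), the two matrices are distinct, so $A$ is adjacent to $xE_{1j}\in\Dd$.

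Combining the two cases shows that every vertex outside $\Dd$ has a neighbour in $\Dd$, so $\Dd$ is a dominating set. The only point that must actually be verified is the claim that not all first-row cofactors vanish, and this is immediate from the Laplace expansion, so I do not anticipate a genuine obstacle; the whole argument rests on the observation that modifying a single first-row entry already suffices to annihilate an otherwise nonzero determinant.
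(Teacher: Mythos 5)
Your proof is correct and takes essentially the same approach as the paper's: singular matrices are dominated by $0$, and an invertible $A$ is handled by choosing $j$ with nonzero $(1,j)$-cofactor and solving $\det(A+xE_{1j})=\det A + x\,c_j=0$ for $x$. You are in fact a bit more careful than the paper, which omits the sign $(-1)^{1+j}$ in the cofactor and does not explicitly verify that $x\in F^*$ or that $A\neq xE_{1j}$.
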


\medskip

\begin{proof}
  Choose an arbitrary matrix $A \in M_n(F)$ and denote by $A(1,j)$ the submatrix of  matrix $A$ without the first row and the $j$-th column.  
  If $A$ is a zero divisor, then there is an edge from $A$ to the zero matrix.
  If $A$ is invertible, then its determinant is nonzero, therefore there exists $j$,
  $1 \leq j \leq n$ such that the determinant of the $(n-1)\times (n-1)$ submatrix $A(1,j)$ is nonzero.  Since for any $x \in F$ we have
  $\det(A+xE_{1j})=\det(A)+x\det(A(1,j))$, we can choose $x=-\det(A(1,j))^{-1}\det(A)$ so that
  $\det(A+xE_{1j})=0$ and thus the matrix $A$ is connected to the matrix $xE_{1j}$.
\end{proof}

\bigskip

The next proposition states that the domination number does not change modulo the Jacobson radical.

\bigskip

\begin{Proposition}\label{thm:R/J}
 For every finite ring $R$ and its Jacobson radical $J=J(R)$ we have
  $$\gamma(\tau(R))=\gamma(\tau(R/J)).$$
\end{Proposition}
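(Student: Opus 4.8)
The plan is to establish the equality $\gamma(\tau(R))=\gamma(\tau(R/J))$ by proving the two inequalities separately, exploiting the key fact proved earlier that $a+b\in Z(R)$ if and only if $(a+J)+(b+J)\in Z(R/J)$ (Lemma \ref{lemma:R/J}), together with the observation already used in Lemma \ref{thm:Ham3} that zero-divisors are stable under perturbation by elements of $J$ (if $x\in Z(R)$ and $j\in J$ then $x+j\in Z(R)$). The guiding picture is that the natural projection $\pi\colon R\to R/J$ partitions $V(\tau(R))$ into fibers $x+J$, each of size $|J|$, and Lemma \ref{lemma:R/J} says that adjacency in $\tau(R)$ between two vertices depends \emph{only} on the cosets they lie in; in other words, $\tau(R)$ is (up to the diagonal/self-loop subtlety) a ``blow-up'' of $\tau(R/J)$ in which each vertex of $\tau(R/J)$ is replaced by an independent-or-complete set of $|J|$ copies and edges are replicated across fibers.

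For the inequality $\gamma(\tau(R))\le\gamma(\tau(R/J))$, I would take a minimum dominating set $\overline{\Dd}=\{\bar d_1,\ldots,\bar d_\gamma\}$ of $\tau(R/J)$ and lift it: for each $\bar d_i$ choose any representative $d_i\in R$ with $d_i+J=\bar d_i$, and set $\Dd=\{d_1,\ldots,d_\gamma\}$. Given an arbitrary vertex $x\in R$ not in $\Dd$, its image $\bar x$ is either in $\overline{\Dd}$ or dominated by some $\bar d_i$, so $\bar x+\bar d_i\in Z(R/J)$; by Lemma \ref{lemma:R/J} this forces $x+d_i\in Z(R)$, giving an edge from $x$ to $d_i$ in $\tau(R)$ (one must handle the boundary cases where $\bar x=\bar d_i$ but $x\ne d_i$, i.e. $x$ and $d_i$ lie in the same fiber; here $x+d_i\equiv 2\bar d_i$ modulo $J$, and since $\bar d_i$ is adjacent to itself in the blow-up whenever $2\bar d_i\in Z(R/J)$, this is covered, and when $2\bar d_i\notin Z(R/J)$ one instead uses that $\bar x$, lying in $\overline{\Dd}$, is the dominating vertex itself and the argument is trivial). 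Thus $\Dd$ dominates $\tau(R)$ and $\gamma(\tau(R))\le|\Dd|=\gamma(\tau(R/J))$.

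For the reverse inequality $\gamma(\tau(R/J))\le\gamma(\tau(R))$, I would take a minimum dominating set $\Dd$ of $\tau(R)$ and project it, setting $\overline{\Dd}=\pi(\Dd)=\{d+J;\ d\in\Dd\}$, noting $|\overline{\Dd}|\le|\Dd|$. The claim is that $\overline{\Dd}$ dominates $\tau(R/J)$. Given $\bar y\in R/J$, lift it to any $y\in R$; since $\Dd$ dominates $\tau(R)$, either $y\in\Dd$ (so $\bar y\in\overline{\Dd}$) or there is $d\in\Dd$ with $y+d\in Z(R)$, whence by Lemma \ref{lemma:R/J} we get $\bar y+\bar d\in Z(R/J)$ with $\bar d\in\overline{\Dd}$, so $\bar y$ is dominated. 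This yields $\gamma(\tau(R/J))\le|\overline{\Dd}|\le|\Dd|=\gamma(\tau(R))$, and combining the two inequalities gives the result.

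The main obstacle I anticipate is purely bookkeeping around the ``diagonal'' of the blow-up, namely the cases where two vertices $x,d$ lie in the same coset of $J$ (so $\bar x=\bar d$) yet are distinct in $R$: adjacency in $\tau(R)$ requires $x\ne d$ and $x+d\in Z(R)$, while in $\tau(R/J)$ a single vertex $\bar d$ cannot dominate \emph{itself}. I must verify that within each fiber the perturbation stability of $Z(R)$ (elements of a fiber differ by something in $J$, and $2d_i$ shifted by $J$ stays inside or outside $Z(R)$ consistently) makes the lifting argument go through without needing extra dominating vertices — equivalently, that a fiber over a vertex not in $\overline{\Dd}$ is genuinely dominated from \emph{outside} the fiber by the lifted set, which is exactly what the projection argument delivers. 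Once these same-coset cases are checked, both inequalities are immediate from Lemma \ref{lemma:R/J}.
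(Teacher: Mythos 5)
Your second inequality (projecting a dominating set of $\tau(R)$ down to $\tau(R/J)$) is correct and coincides with the first half of the paper's proof. The genuine gap is in your lifting direction, at exactly the ``diagonal'' case you flagged, and your resolution of it is a non sequitur: when $\bar x=\bar d_i$, $x\ne d_i$ and $2\bar d_i\notin Z(R/J)$, the vertex that must be dominated is $x\in R$, and $x$ is \emph{not} in the lifted set $\Dd$ (only $d_i$ is); the fact that its coset lies in $\overline{\Dd}$ is irrelevant, since domination is a condition on vertices of $\tau(R)$, not on cosets. Such an $x$ has a neighbour in $\Dd$ only if $2\bar d_i\in Z(R/J)$ (in-fiber domination) or $\bar d_i$ is adjacent in $\tau(R/J)$ to another member of $\overline{\Dd}$, and neither is guaranteed for a minimum dominating set of $\tau(R/J)$.

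This gap cannot be repaired, because the inequality $\gamma(\tau(R))\leq\gamma(\tau(R/J))$ --- and hence the Proposition itself --- is false. Take $R=\ZZ_9$, $J=Z(R)=\{0,3,6\}$, $R/J\cong\ZZ_3$. By Proposition \ref{thm:local}, $\tau(\ZZ_9)$ is the disjoint union of $K_3$ on $\{0,3,6\}$ and $K_{3,3}$ with parts $\{1,4,7\}$ and $\{2,5,8\}$, so $\gamma(\tau(\ZZ_9))=1+2=3$, whereas $\gamma(\tau(\ZZ_3))=2$ (the isolated vertex $\bar 0$ plus one endpoint of the edge $\bar 1 - \bar 2$). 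Every minimum dominating set of $\tau(\ZZ_3)$ is of the form $\{\bar 0,\bar u\}$ with $\bar u\ne \bar 0$, and any lift, say $\{0,1\}$, leaves the vertex $4$ undominated (its neighbours are $2,5,8$) --- precisely your bad case, since $2\cdot\bar 1=\bar 2\notin Z(\ZZ_3)$ and $\bar 1$ has no neighbour inside $\{\bar 0,\bar 1\}$. You should know that the paper's own proof commits the very oversight you tried to patch: in its second half it verifies domination only for elements $b$ with $b+J\notin\Dd$, silently ignoring the elements of the cosets $a_i+J$ other than $a_i$ themselves. (The same example also contradicts Proposition \ref{thm:taulocal}, which would give $\gamma(\tau(\ZZ_9))=2$.) What survives in general is only $\gamma(\tau(R/J))\leq\gamma(\tau(R))$, together with the observation that a dominating set $\overline{\Dd}$ of $\tau(R/J)$ \emph{does} lift whenever each $\bar d\in\overline{\Dd}$ satisfies $2\bar d\in Z(R/J)$ or has a neighbour in $\overline{\Dd}$; the set of Lemma \ref{thm:MnF} consists of zero-divisors and so meets this condition, which is what the upper bound of Theorem \ref{thm:dom} actually requires.
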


\medskip

\begin{proof}
Suppose $\Dd=\{a_1,\ldots,a_m\}$ is a dominating set for $\tau(R)$ and let $b \in R \backslash \Dd$. Since $\Dd$
is a dominating set, there exists  $i$, $1 \leq i \leq m$, such that $b+a_i \in Z(R)$.
By Lemma \ref{lemma:R/J} it follows that $(b+J)+(a_i+J) \in Z(R/J)$, so  $\{a_1+J,\ldots,a_m+J\}$ is a dominating set for 
$\tau(R/J)$, which implies $\gamma(\tau(R))\geq \gamma(\tau(R/J))$. 

Suppose now $\Dd=\{a_1+J,\ldots,a_m+J\}$ is a dominating set for $\tau(R/J)$ and let 
$b \in R/J \backslash \Dd$. There exists $i$, $1 \leq i \leq m$, such that 
$(b+J)+(a_i+J) \in Z(R/J)$ and by Lemma \ref{lemma:R/J} it follows that $b+a_i \in Z(R)$. So, $\{a_1,\ldots,a_m\}$ is a dominating set for
 $\tau(R)$ and  $\gamma(\tau(R))\leq \gamma(\tau(R/J))$.
\end{proof}

\bigskip

\begin{Lemma}\label{thm:RS}
For all finite rings $R$ and $S$ we have
  $$\gamma(\tau(R \times S))=\min\{\gamma(\tau(R)),\gamma(\tau(S))\}.$$
\end{Lemma}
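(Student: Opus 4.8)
The plan is to establish the two inequalities $\gamma(\tau(R\times S))\leq\min\{\gamma(\tau(R)),\gamma(\tau(S))\}$ and $\gamma(\tau(R\times S))\geq\min\{\gamma(\tau(R)),\gamma(\tau(S))\}$ separately, relying throughout on the basic characterisation of zero-divisors in a product: an element $(x,y)\in R\times S$ lies in $Z(R\times S)$ if and only if $x\in Z(R)$ or $y\in Z(S)$. This is the same observation already used in the proof of Lemma \ref{thm:Ham2}, and it reduces everything to tracking membership in $Z(R)$ and $Z(S)$ coordinatewise. Note also that $(a,b)+(c,d)\in Z(R\times S)$ iff $a+c\in Z(R)$ or $b+d\in Z(S)$, so adjacency in $\tau(R\times S)$ decouples into adjacency in $\tau(R)$ or adjacency in $\tau(S)$.

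For the upper bound $\gamma(\tau(R\times S))\leq\min\{\gamma(\tau(R)),\gamma(\tau(S))\}$, suppose without loss of generality that the minimum is $\gamma(\tau(R))$, and let $\Dd=\{a_1,\ldots,a_m\}$ be a minimum dominating set for $\tau(R)$. The idea is to fix any single element $s\in S$ and lift $\Dd$ to $\Dd'=\{(a_1,s),\ldots,(a_m,s)\}$. Given an arbitrary $(b,y)\in R\times S$ not in $\Dd'$, I want to produce a neighbour in $\Dd'$. If $b$ already has a neighbour $a_i$ in $\Dd$, i.e. $b+a_i\in Z(R)$, then $(b,y)+(a_i,s)=(b+a_i,\,y+s)\in Z(R\times S)$ regardless of the second coordinate, so $(a_i,s)$ dominates $(b,y)$; one must only check the vertices $(a_i,y)$ with $y\neq s$ that are not themselves in $\Dd'$, which are handled the same way (each $a_i$ is dominated in $\tau(R)$ or can be dominated since $\Dd$ dominates all of $R$, the case $b=a_i$ being covered because $a_i+a_j\in Z(R)$ for a suitable $j$ as $\Dd$ dominates $a_i$). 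Thus $\Dd'$ is a dominating set of size $m$, giving the upper bound.

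The lower bound $\gamma(\tau(R\times S))\geq\min\{\gamma(\tau(R)),\gamma(\tau(S))\}$ is where the real work lies, and I expect it to be the main obstacle. Let $\Dd=\{(a_1,b_1),\ldots,(a_m,b_m)\}$ be a minimum dominating set for $\tau(R\times S)$, so $m=\gamma(\tau(R\times S))$; I must show $m\geq\gamma(\tau(R))$ or $m\geq\gamma(\tau(S))$. The natural attempt is to project: consider the first coordinates $\{a_1,\ldots,a_m\}$ and ask whether they dominate $\tau(R)$. They need not, because an edge $(b,y)+(a_i,b_i)\in Z(R\times S)$ might be witnessed purely by the \emph{second} coordinate ($y+b_i\in Z(S)$) while $b+a_i\notin Z(R)$, so the projection fails to dominate $b$ in $\tau(R)$. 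The key step is therefore a dichotomy argument: I plan to partition the index set $\{1,\ldots,m\}$ according to a fixed ``probe''. Concretely, pick an invertible element $u\in S^*$ (so $u\notin Z(S)$) and an invertible $v\in R^*$, and examine which vertices of the form $(b,u)$ and $(v,y)$ are dominated by coordinates of the right type. For a vertex $(b,u)$ with $b\in R$, since $u$ is a unit, domination $(b,u)+(a_i,b_i)\in Z(R\times S)$ forces $b+a_i\in Z(R)$ \emph{unless} $u+b_i\in Z(S)$; restricting to those $a_i$ with $b_i$ \emph{not} solving $u+b_i\in Z(S)$ shows these first coordinates must dominate every such $b$. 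The cleanest route is to argue that either the set of first coordinates $\{a_i\}$ already dominates $\tau(R)$ — giving $m\geq\gamma(\tau(R))$ — or there is some structural reason (a second-coordinate obstruction affecting all of $R$ uniformly) forcing the second coordinates $\{b_i\}$ to dominate $\tau(S)$, giving $m\geq\gamma(\tau(S))$. Making this dichotomy airtight is the delicate point: I would formalise it by choosing a unit $u\in S^*$, letting $T=\{\,i : u+b_i\in Z(S)\,\}$, and observing that the vertices $(b,u)$ for $b$ ranging over $R$ can only be dominated by indices in $T$ together with those $i$ for which $b+a_i\in Z(R)$; a counting or covering argument on how $T$ interacts with domination in $\tau(S)$ should force one of the two projections to be a genuine dominating set. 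Combining the two inequalities yields the claimed equality.
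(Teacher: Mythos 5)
Your upper-bound argument has a genuine gap, and it is not a repairable one, because the inequality $\gamma(\tau(R\times S))\leq\min\{\gamma(\tau(R)),\gamma(\tau(S))\}$ is actually false. The flawed step is the parenthetical claim that each $a_i$ ``can be dominated since $\Dd$ dominates all of $R$'': a dominating set is only required to dominate the vertices \emph{outside} itself, so an element $a_i\in\Dd$ may well have no neighbour inside $\Dd$ (in $\tau(\ZZ_3)$ the minimum dominating set $\Dd=\{0,1\}$ has this defect: the only neighbour of $1$ is $2\notin\Dd$). For such an $a_i$ and any $y$ with $y+s\notin Z(S)$, the vertex $(a_i,y)$ has no neighbour in your lifted set $\Dd'=\{(a_1,s),\ldots,(a_m,s)\}$, whatever $s$ you fixed. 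And no cleverer lift can work: take $R=\ZZ_3$, $S=\ZZ_5$, so $\min\{\gamma(\tau(\ZZ_3)),\gamma(\tau(\ZZ_5))\}=\min\{2,3\}=2$, while $\gamma(\tau(\ZZ_3\times\ZZ_5))=\gamma(\tau(\ZZ_{15}))\geq 3$. Indeed, writing vertices in CRT coordinates, $x$ is adjacent to $d$ in $\tau(\ZZ_{15})$ if and only if $x\equiv -d\pmod 3$ or $x\equiv -d\pmod 5$; two vertices $d_1,d_2$ thus cover at most two residues modulo $3$ and two residues modulo $5$, leaving at least $(3-2)(5-2)=3$ vertices with no neighbour in $\{d_1,d_2\}$, at least one of which lies outside $\{d_1,d_2\}$. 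You are in good company: the paper's own proof of this direction asserts that $\{(a_1,0),\ldots,(a_m,0)\}$ is ``clearly'' a dominating set for $\tau(R\times S)$, which fails for exactly the same reason, so the lemma as stated in the paper is false. (The inequality does hold, and Theorem \ref{thm:dom} is thereby salvaged, when one factor has a dominating set of the required size consisting of zero-divisors and containing $0$ --- as the set of Lemma \ref{thm:MnF} does for $M_n(F)$, $n\geq 2$ --- or when one lifts an entire field factor; in those cases the lift by $0$ genuinely dominates.)

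On the lower bound, your dichotomy is the right idea, but the execution is both incomplete and misdirected: the ``probe'' should not be a unit $u\in S^*$ (invertibility plays no role in adjacency, only membership of sums in $Z(S)$ does), and if even one index $i$ satisfies $u+b_i\in Z(S)$ then \emph{every} vertex $(b,u)$ is dominated through that single index, so the probe yields no information about the first coordinates; the promised ``counting or covering argument'' is never supplied and is not needed. The clean completion, which is essentially the paper's argument, runs as follows: if $\{a_1,\ldots,a_m\}$ fails to dominate $\tau(R)$, there is a witness $x\in R\setminus\{a_1,\ldots,a_m\}$ with $x+a_i\notin Z(R)$ for all $i$; then for every $y\in S$ the vertex $(x,y)$ lies outside $\Dd$, hence is adjacent to some $(a_i,b_i)$, and since $x+a_i\notin Z(R)$ this forces $y+b_i\in Z(S)$. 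In particular every $y\in S\setminus\{b_1,\ldots,b_m\}$ has a neighbour among the $b_i$, so $\{b_1,\ldots,b_m\}$ dominates $\tau(S)$ and $m\geq\gamma(\tau(S))\geq\min\{\gamma(\tau(R)),\gamma(\tau(S))\}$. So the inequality $\gamma(\tau(R\times S))\geq\min\{\gamma(\tau(R)),\gamma(\tau(S))\}$ is correct and provable along the lines you sketched; it is the opposite inequality, in your proposal and in the paper alike, that is beyond repair.
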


\medskip
\begin{proof}
Suppose first $\{a_1,\ldots,a_m\}$ is a  dominating set for $\tau(R)$ and $\{b_1,\ldots,b_n\}$ is a  dominating set for $\tau(S)$.
Clearly, $\{(a_1,0),\ldots,(a_m,0)\}$ and $\{(b_1,0),\ldots,(b_m,0)\}$ are dominating sets for $R \times S$ and therefore
  $\gamma(\tau(R \times S))\leq \min\{\gamma(\tau(R)),\gamma(\tau(S))\}$.
  
 Suppose now $\{(a_1,b_1),\ldots,(a_m,b_m)\}$ is a  dominating set for $\tau(R \times S)$. If $\{a_1,\ldots,a_m\}$ is a  dominating set
  for $\tau(R)$, then   $\min\{\gamma(\tau(R)),\gamma(\tau(S))\} \leq \gamma(\tau(R)) \leq \gamma(\tau(R \times S))$. Otherwise, 
  there exists $x\in R$, such that $x+a_i\notin Z(R)$ for $i=1,\ldots,m$. Choose an arbitrary $y \in S$. There exists $i$, such that
  $(x,y)+(a_i,b_i) \in Z(R \times S)$, i.e. there exists a nonzero $(z,w) \in R \times S$, such that $((x,y)+(a_i,b_i))(z,w)=(0,0)$. This implies that
  $(x+a_i)z=0$ and $(y+b_i)w=0$. Since $x+a_i\notin Z(R)$, $z=0$ and thus $w \ne 0$. This implies that $y+b_i\in Z(S)$ and therefore
  $\{b_1,\ldots,b_m\}$ is a  dominating set  for $\tau(S)$. It follows that
  $\min\{\gamma(\tau(R)),\gamma(\tau(S))\} \leq \gamma(\tau(R \times S))$. 
\end{proof}

\bigskip

Now, we can apply Wedderburn's theorem, 
Lemma \ref{thm:MnF}, Proposition \ref{thm:R/J} and Lemma \ref{thm:RS} to obtain the following result.

\bigskip

\begin{Theorem}\label{thm:dom}
 If $R$ is an arbitrary finite ring and $R/J=M_{n_1}(F_1) \times \ldots \times M_{n_k}(F_k) $, then
  $$\gamma(\tau(R)) \leq \min_i \{n_i (|F_i|-1)+1\}.
$$
    \hfill$\blacksquare$
\end{Theorem}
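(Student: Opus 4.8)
The plan is to reduce the problem to the individual factors of the Wedderburn decomposition and to bound the domination number of each full matrix ring separately. First I would invoke Proposition \ref{thm:R/J} to pass from $R$ to the semisimple ring $R/J$, giving $\gamma(\tau(R))=\gamma(\tau(R/J))$; this is precisely why the hypothesis is phrased in terms of $R/J$. Next, applying Lemma \ref{thm:RS} inductively across the $k$ factors of $R/J=M_{n_1}(F_1)\times\cdots\times M_{n_k}(F_k)$ collapses the domination number of the product to the minimum of the domination numbers of the factors, so that $\gamma(\tau(R/J))=\min_i \gamma(\tau(M_{n_i}(F_i)))$.

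It then remains to bound $\gamma(\tau(M_{n_i}(F_i)))$ by $n_i(|F_i|-1)+1$ for each $i$. When $n_i\geq 2$, Lemma \ref{thm:MnF} already supplies an explicit dominating set, namely $\Dd=\{x E_{1j};\ x\in F_i^*,\ 1\leq j\leq n_i\}\cup\{0\}$, and a direct count of its elements gives exactly $n_i(|F_i|-1)+1$: the matrices $xE_{1j}$ are pairwise distinct as $x$ ranges over the $|F_i|-1$ nonzero scalars and $j$ over $1,\ldots,n_i$, and one adds the zero matrix. Hence $\gamma(\tau(M_{n_i}(F_i)))\leq n_i(|F_i|-1)+1$ in this range, and taking the minimum over $i$ yields the claimed bound.

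The one point requiring extra care is the factors with $n_i=1$, that is, the field factors $M_1(F_i)=F_i$, since Lemma \ref{thm:MnF} assumes $n\geq 2$. Here I would instead apply Proposition \ref{thm:taulocal} to the (local) ring $F_i$, for which $J=0$ and $Z(F_i)=\{0\}$, so $|F_i/J|=|F_i|$. This gives $\gamma(\tau(F_i))=|F_i|$ when $\ch(F_i)=2$ and $\gamma(\tau(F_i))=\tfrac12(|F_i|+1)$ otherwise, and in both cases the value is at most $|F_i|=1\cdot(|F_i|-1)+1$, so the desired bound $n_i(|F_i|-1)+1$ holds for $n_i=1$ as well. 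I expect this field case to be the only genuine subtlety; once it is dispatched, the theorem is a routine assembly of Proposition \ref{thm:R/J}, Lemma \ref{thm:RS}, and Lemma \ref{thm:MnF}, with Wedderburn's theorem supplying the decomposition of $R/J$.
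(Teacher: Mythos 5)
Your proposal is correct and takes essentially the same route as the paper: the paper's proof is precisely the one-line assembly of Wedderburn's theorem, Proposition \ref{thm:R/J}, Lemma \ref{thm:RS} and Lemma \ref{thm:MnF}, exactly as you describe. In fact you are slightly more careful than the paper itself, which never explicitly addresses the factors with $n_i=1$; your dispatching of that case via Proposition \ref{thm:taulocal} (giving $\gamma(\tau(F_i))\leq |F_i| = 1\cdot(|F_i|-1)+1$) fills a step the paper leaves implicit.
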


\bigskip

Obviously, if $n_i=1$ for some $i$, we can find a ring with a smaller dominating number than the bound in Theorem \ref{thm:dom}. For example, $\gamma(\tau(\ZZ_3))=2<3$. 
However, a straightforward calculation shows that $\gamma(\tau(M_2(\ZZ_2))) = 3=2 (|\ZZ_2|-1)+1$ and a calculation by computer shows that 
$\gamma(\tau(M_3(\ZZ_2))) = 4=3 (|\ZZ_2|-1)+1$.  We have a reason to believe the following.

\bigskip

\begin{Conjecture}
 If $R$ is arbitrary ring and $R/J=M_{n_1}(F_1) \times \ldots \times M_{n_k}(F_k) $, where 
 $n_i\geq 2$ for all $i$, then
  $$\gamma(\tau(R)) = \min_i \{n_i (|F_i|-1)+1\}.$$
\end{Conjecture}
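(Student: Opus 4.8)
The statement to prove is the \emph{equality} in the conjecture, so the content lies entirely in the lower bound: Theorem \ref{thm:dom} already gives $\gamma(\tau(R)) \leq \min_i\{n_i(|F_i|-1)+1\}$, and by Proposition \ref{thm:R/J} we may replace $R$ by $R/J = M_{n_1}(F_1) \times \cdots \times M_{n_k}(F_k)$. By Lemma \ref{thm:RS}, $\gamma(\tau(R/J)) = \min_i \gamma(\tau(M_{n_i}(F_i)))$, so the whole problem reduces to a single matrix factor: the plan is to prove that for a field $F$ and $n \geq 2$,
$$\gamma(\tau(M_n(F))) = n(|F|-1)+1,$$
which combined with the reductions yields the claimed equality. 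The upper bound for this single case is exactly Lemma \ref{thm:MnF} (the set $\Dd$ there has size $n(|F|-1)+1$), so only the lower bound $\gamma(\tau(M_n(F))) \geq n(|F|-1)+1$ remains.

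\textbf{Key steps for the lower bound.} Let $q = |F|$ and suppose $\Dd \subseteq M_n(F)$ is any dominating set; I want to force $|\Dd| \geq n(q-1)+1$. The natural strategy is a counting or covering argument on the invertible matrices. A vertex $A \in \Dd$ dominates an invertible $B \notin \Dd$ exactly when $A + B \in Z(M_n(F))$, i.e.\ $\det(A+B)=0$. Fixing $A$, the set of $B$ it dominates is $\{B : \det(A+B)=0\}$, which after the translation $C = A+B$ is just the set of singular matrices $Z(M_n(F))$. So a single vertex $A$ can neighbour at most $|Z(M_n(F))|$ vertices. Since the total number of invertible matrices is $|GL_n(F)|$ and these must all be dominated (each either lies in $\Dd$ or is a neighbour of some element of $\Dd$), I would set up the inequality $|\Dd| \cdot |Z(M_n(F))| + |\Dd| \geq |GL_n(F)|$, or more carefully count using the regularity of $\tau(M_n(F))$ from Lemma \ref{lem1}. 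The degree of each vertex is essentially $|Z(M_n(F))|-1$ (adjusted by parity via Lemma \ref{lem1}), so the standard domination lower bound $\gamma \geq |V| / (1 + \Delta)$ gives a first estimate; the task is to sharpen it to the exact value $n(q-1)+1$.

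\textbf{Refinement and the main obstacle.} The crude degree bound $|V|/(1+\Delta)$ will almost certainly fall short of $n(q-1)+1$, because domination in $\tau(M_n(F))$ is far more structured than a generic regular graph. The sharper approach is to exploit the rank stratification: I would argue that to dominate \emph{all} invertible matrices, $\Dd$ must, for each possible ``deficiency direction,'' contain enough matrices so that every invertible $B$ has some $A \in \Dd$ with $A+B$ singular. Concretely, one expects that the extremal dominating sets look like $\Dd$ in Lemma \ref{thm:MnF}, namely a union over a fixed row (or a rank-one pattern) of all nonzero scalings, plus the zero matrix; the $n(q-1)$ comes from $n$ columns times $q-1$ nonzero scalars, and the $+1$ from the zero matrix that dominates all of $Z(M_n(F))$. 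The hard part will be proving that no cleverer, smaller configuration can cover $GL_n(F)$: this requires showing that the ``singular translates'' $A + Z(M_n(F))$ for $A \in \Dd$ cannot jointly cover $GL_n(F)$ with fewer than $n(q-1)+1$ translates. I would attempt this either by a dimension/incidence argument over $F$ (bounding how many invertibles a fixed $A$ can dominate by analysing $\det(A+xE_{ij})$ as a polynomial, as in the proof of Lemma \ref{thm:MnF}), or by an inductive reduction on $n$ peeling off one row and relating $M_n(F)$-domination to $M_{n-1}(F)$-domination. This covering-lower-bound step is the genuine obstacle — and indeed the fact that the authors state it only as a Conjecture (verified by hand for $M_2(\ZZ_2)$ and by computer for $M_3(\ZZ_2)$) strongly suggests that a clean general argument for the lower bound is not known, so any honest plan must flag this step as the place where the proof may not go through.
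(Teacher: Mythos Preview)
The statement you are attempting is labelled a \emph{Conjecture} in the paper, and the paper offers no proof whatsoever: the authors give only the upper bound (Theorem~\ref{thm:dom}) and computational evidence for $M_2(\ZZ_2)$ and $M_3(\ZZ_2)$. Your reductions are correct and match exactly what the paper's lemmas provide --- Proposition~\ref{thm:R/J} passes to $R/J$, Lemma~\ref{thm:RS} reduces to a single factor, and Lemma~\ref{thm:MnF} gives the upper bound $n(|F|-1)+1$ for $M_n(F)$ --- so your identification of the problem as ``prove $\gamma(\tau(M_n(F)))\geq n(|F|-1)+1$ for $n\geq 2$'' is precisely the open content of the conjecture.

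Your honesty in the final paragraph is the correct assessment: the covering/counting arguments you sketch (the crude bound $|V|/(1+\Delta)$, or bounding how many invertibles a single translate $A+Z(M_n(F))$ can cover) do not reach $n(q-1)+1$, and you rightly flag this as the place where the proof does not go through. There is no gap to name beyond the one you have already named yourself; the paper simply does not contain a proof for you to compare against.
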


%
%%-----------------------------------------------------
%%-----------------------------------------------------
%\section{The total graph of a semiring - neki ostanki ?}
%
%\bigskip
%
%\begin{Lemma}
% If $S$ is antinegative semiring and $a \notin Z(S)$, then $a+x \notin Z(S)$ for all $x\in S$.
%\end{Lemma}
%
%\medskip
%
%\begin{proof}
% If $(a+x)z=0$, then $az=0$, a contradiction.
%\end{proof}

\bigskip
\bigskip


\begin{thebibliography}{References}

\bibitem{akbari}
S.Akbari, D. Kiani, F. Mahammadi, S. Moradi, \emph{The total graph and regular graph of a commutative ring}, 
J. Pure Appl. Algebra 213 (2009), 2224--2228.

\bibitem{AndBad08}
  D. F. Anderson, A. Badawi, \emph{The total graph of a commutative ring}, J. Algebra 320 (2008), 2706-2719.

\bibitem{AndBad11}
  D. F. Anderson, A. Badawi, \emph{On the total graph of a commutative ring without the zero element}, J. Algebra Appl. 11  (2012) 1250074, 18 pp.

\bibitem{ganesan}
  N. Ganesan, \emph{Properties of rings with a finite number of zero divisors},
  Math. Ann. 161 (1965), 241--246.

\bibitem{koh}
  K. Koh, \emph{On properties of rings with a finite number of zero divisors},
  Math. Ann. 171 (1967), 79--80.

\bibitem{maimani}
  H.R. Maimani, C. Wickham, S. Yassemi, \emph{Rings whose total graphs have genus at most one}, Rocky Mountain J. Math
  42 (2012), 1551--1560.

\bibitem{Shekarriz}
  M.H. Shekarriz, M.H. Shirdareh Haghighi, H. Sharif, \emph{On the total graph of a finite commutative ring},
  Comm. Algebra 40 (2012), 2798--2807.

\bibitem{TaChAsir13}
 T. Tamizh Chelvam, T. Asir, \emph{On the genus of the total graph of a commutative ring},  Comm. Algebra 41 (2013), no. 1, 142--153. 
 
%%  \bibitem{AMRR06}
%%   S.~Akbari, A.~Mohammadian, H.~Radjavi, P.~Raja: \emph{On the diameters of commuting graphs}, 
%%   Linear Algebra Appl.~418 (2006), 161--176.

%%  \bibitem{AR06}
%%   S.~Akbari, P.~Raja: \emph{Commuting graphs of some subsets in simple rings}, 
%%   Linear Algebra Appl.~416 (2006), 1038--1047.

%%  \bibitem{DolObl09}
%%  D.~Dol\v zan, P.~Oblak: \emph{The commuting graph of matrices over semirings},  Linear Algebra Appl., 
%%  to appear.

%%  \bibitem{DolObl09}
%%  D.~Dol\v zan, P.~Oblak: \emph{Invertible and nilpotent matrices over antirings},  Linear Algebra Appl. 430 (2009), no.~1, 271--278.

%%  \bibitem{IraJaf08}
%%   A.~Iranmanesh, A.~Jafarzadeh: \emph{On commuting graph associated with the symmetric and alternating groups}, J.~Alg.~Appl.~ 7 (2008), no.~1, 129--146 .

%%  \bibitem{akbari04}
%%   S.~Akbari, A.~Mohammadian: \emph{On the zero-divisor graph of a commutative ring}, J.~Algebra~ 274 (2004), no. 2, 847--855.

%%  \bibitem{akbari06}
%%   S.~Akbari, A.~Mohammadian: \emph{Zero-divisor graphs of non-commutative rings},  J.~Algebra~ 296 (2006),  no. 2, 462--479. 

%%  \bibitem{akbari07}
%%   S.~Akbari, A.~Mohammadian: \emph{On zero-divisor graphs of finite rings}, 
%%   J.~Algebra~ 314 (2007), no.1, 168--184.
%% 
%%  \bibitem{and08}
%%  D.~F.~Anderson: \emph{On the diameter and girth of a zero-divisor graph. II}, Houston~J.~Math.~ 34 (2008),  no. 2, 361--371.
%%  
%%  \bibitem{andbad08}
%%  D.~F.~Anderson, A.~Badawi: \emph{On the zero-divisor graph of a ring}, Comm.~Algebra~ 36 (2008),  no. 8, 3073--3092. 
%%  
%%  \bibitem{andlevsha03}
%%   D.~F.~Anderson, R.~Levy, J.~Shapiro: \emph{Zero-divisor graphs, von Neumann regular rings, and Boolean 
%%   algebras}, J.~Pure Appl.~Alg.~ 180 (2003), 221--241.

%%  \bibitem{andliv99}
%%   D.~F.~Anderson, P.~S.~Livingston: \emph{The Zero-divisor graph of a Commutative Ring}, J.~Algebra~ 217 (1999), 434--447.

%%  \bibitem{andmul07}
%%   D.~F.~Anderson, S.~B.~Mulay: \emph{On the diameter and girth of a zero-divisor graph}, J.~Algebra~ 210 (2007), 543--550.

%%  \bibitem{atani08}
%%   S.~E.~Atani: \emph{The zero-divisor graph with respect to ideals of a commutative semiring}, 
%%   Glas.~Mat.~ 43(63) (2008), 309-�320.

%%  \bibitem{atani09}
%%   S.~E.~Atani: \emph{An ideal-based zero-divisor graph of a commutative semiring}, 
%%   Glas.~Mat.~ 44(64) (2009), 141-�153.
%%   
%%  \bibitem{beck88}
%%  I.~Beck: \emph{Coloring of commutative rings}, J.~Algebra~ 116 (1988), 208--226.

%%  \bibitem{bozpet09}
%%  I.~Bo\v zi\' c, Z.~Petrovi\' c: \emph{Zero-divisor graphs of matrices over commutative rings}, Comm.~Algebra~ 37 (2009), 
%%  no. 4, 1186--1192.

%%  \bibitem{cann05}
%%  A.~Cannon, K.~Neuerburg, S.~P.~Redmond: \emph{Zero-divisor graphs of nearrings and semigroups}, 
%%  in: H. Kiechle, A. Kreuzer, M.J. Thomsen (Eds.), Nearrings and Nearfields, Springer, Dordrecht, The Netherlands, 2005, 
%%  189--200.

%%  \bibitem{corbas1}
%%  B.~Corbas, G.~D.~Williams, \emph{Rings of order $p^5$. I. Nonlocal rings.},  J.~Algebra~ 231 (2000),  no. 2, 677�-690.

%%  \bibitem{corbas2}
%%  B.~Corbas, G.~D.~Williams, \emph{Rings of order $p^5$. II. Local rings.},  J.~Algebra~ 231 (2000),  no. 2, 691�-704.

%%  \bibitem{dem02}
%%  F.~R.~DeMeyer, T.~McKenzie, K.~Schneider, \emph{The zero-divisor graph of a commutative semigroup}, 
%%  Semigroup Forum, 65 (2002), 206--214.

%%  \bibitem{HW}
%% U.~Hebisch, H.~J.~Weinert, \emph{Semirings: algebraic theory and applications in computer science}.
%% Series in Algebra, 5. World Scientific Publishing Co., Inc., River Edge, NJ, 1998.
%% 
%%   \bibitem{lucas06}
%%   T.~G.~Lucas: \emph{The diameter of a zero divisor graph}, J.~Algebra~ 301 (2006), 174--193.

%%  \bibitem{mcdonald}
%%    B.~R.~McDonald: {\sl Finite Rings with Identity}, Marcel Dekker Inc., New York, 1974.
%%    
%%  \bibitem{ragh}
%%   R.~Raghavendran: \emph{Finite associative rings},  Compositio Math.~ 21 (1969), 195--229.

%%  \bibitem{redmond02}
%%   S.~P.~Redmond: \emph{The Zero-Divisor Graph of a Non-Commutative Ring},  Int.~J.~Commut.~Rings~ 1 (2002), no. 4, 
%%   (1999), 203--211.

%%

%%  \bibitem{Segev01}
%%  Y.~Segev: \emph{The commuting graph of minimal nonsolvable groups}, Geom.~Dedicata.~88 (2001),  
%%  55--66.

%%   \bibitem{Skorny86}
%%  L.~A.~Skornyakov: \emph{Invertible matrices over distributative structures}, Sib. Mat. J. 27 (1986),  182--185.  

%%
%%   \bibitem{Tan07}
%%  Y.~Tan: \emph{On invertible matrices over antirings}, Linear Algebra Appl. 423 (2007), no. 2-3, 428--444.  

%%   \bibitem{Tan08}
%%  Y.~Tan: \emph{On nilpotent matrices over antinegative semirings}, Linear Algebra Appl. 429 (2008), no. 5-6, 1243--1253.  
\end{thebibliography}
\end{document}